\pdfoutput=1 

\documentclass[11pt]{amsart}

\usepackage{amssymb,amsthm,enumitem,colonequals,tikz-cd,microtype}
\usepackage[normalem]{ulem} 

\usepackage[osf]{Baskervaldx}
\usepackage[baskervaldx]{newtxmath}
\usepackage[cal=boondoxo]{mathalfa} 

\usepackage[top=3.75cm, bottom=3cm, left=3.5cm, right=3.5cm]{geometry}

\interfootnotelinepenalty=10000 

\usepackage{xcolor}
\colorlet{darkblue}{blue!55!black}
\colorlet{darkcyan}{cyan!50!black}
\colorlet{darkgreen}{green!60!black}

\PassOptionsToPackage{hyphens}{url}
\usepackage{hyperref}
\hypersetup{
    colorlinks=true,
    linkcolor=darkblue,
    urlcolor=darkcyan,
    citecolor=darkgreen,
}

\def\eqref#1{\textcolor{darkblue}{(\ref{#1})}}

\usepackage[nameinlink]{cleveref} 
\Crefformat{section}{#2\S#1#3}
\Crefmultiformat{section}{#2\S\S#1#3}{ and~#2#1#3}{, #2#1#3}{, and~#2#1#3}

\usepackage[pagewise]{lineno}
\overfullrule=100pt 

\let\oldequation\equation
\let\oldendequation\endequation
\renewenvironment{equation}{\linenomathNonumbers\oldequation}{\oldendequation\endlinenomath}
\expandafter\let\expandafter\oldequationstar\csname equation*\endcsname
\expandafter\let\expandafter\oldendequationstar\csname endequation*\endcsname
\renewenvironment{equation*}{\linenomathNonumbers\oldequationstar}{\oldendequationstar\endlinenomath}
\let\oldalign\align
\let\oldendalign\endalign

\expandafter\let\expandafter\oldalignstar\csname align*\endcsname
\expandafter\let\expandafter\oldendalignstar\csname endalign*\endcsname
\renewenvironment{align*}{\linenomathNonumbers\oldalignstar}{\oldendalignstar\endlinenomath}

\theoremstyle{plain}
\newtheorem{theorem}{Theorem}[section]
\newtheorem{lemma}[theorem]{Lemma}
\newtheorem{corollary}[theorem]{Corollary}
\newtheorem{proposition}[theorem]{Proposition}

\theoremstyle{definition}
\newtheorem{definition}[theorem]{Definition}
\newtheorem{example}[theorem]{Example}
\newtheorem{remark}[theorem]{Remark}

\newtheorem{convention}[theorem]{Convention}

\newtheorem*{ack}{Acknowledgments}

\AddToHook{env/conjecture/begin}{\crefalias{theorem}{conjecture}}
\AddToHook{env/lemma/begin}{\crefalias{theorem}{lemma}}
\AddToHook{env/corollary/begin}{\crefalias{theorem}{corollary}}
\AddToHook{env/proposition/begin}{\crefalias{theorem}{proposition}}
\AddToHook{env/definition/begin}{\crefalias{theorem}{definition}}
\AddToHook{env/remark/begin}{\crefalias{theorem}{remark}}
\AddToHook{env/setup/begin}{\crefalias{theorem}{setup}}
\AddToHook{env/example/begin}{\crefalias{theorem}{example}}
\AddToHook{env/convention/begin}{\crefalias{theorem}{convention}}

\setcounter{tocdepth}{1}
\setcounter{secnumdepth}{4}
\numberwithin{equation}{section}
\numberwithin{theorem}{section}


\title[Perfectly generated $t$-structures, algebraic stacks]{Perfectly generated $t$-structures \\ for algebraic stacks}

\author[M.~Hrbek]{Michal Hrbek}
\address{M.~Hrbek,
Institute of Mathematics of the Czech Academy of Sciences,
Žitná 25, 115 67 Prague, Czechia}
\email{hrbek@math.cas.cz}

\author[P.~Lank]{Pat Lank}
\address{P.~Lank,
Dipartimento di Matematica “F. Enriques”, Universit\`{a} degli Studi di Milano, Via Cesare
Saldini 50, 20133 Milano, Italy}
\email{plankmathematics@gmail.com}

\author[S.~Pizzirani]{Simone Pizzirani}
\address{\parbox{\linewidth}{S.~Pizzirani, \\
Institute of Mathematics of the Czech Academy of Sciences,
Žitná 25, 115 67 Prague, Czechia \\
Faculty of Mathematics and Physics, Department
of Algebra, Charles University, Sokolovsk\'a 83, 186 75 Prague, Czechia  }}
\email{pizzirani@math.cas.cz}

\date{\today}

\keywords{Derived categories, algebraic stacks, $t$-structures, classification, perfect complexes}

\subjclass[2020]{14A30 (primary), 14F08, 18G80, 14D23}





\begin{document}
    
\begin{abstract}
    This work studies $t$-structures for the derived category of quasi-coherent sheaves on suitable algebraic stacks. The main result shows that the standard $t$-structure is compactly generated. As an application, we classify compactly generated tensor $t$-structures via Thomason filtrations on the underlying topological space. 
\end{abstract}

\maketitle

\tableofcontents

\section{Introduction}
\label{sec:intro}

\subsection{What is known}
\label{sec:intro_what_is_known}

Since their introduction in \cite{Beilinson/Berstein/Deligne/Gabber:2018}, $t$-structures have become a fundamental tool for extracting information from derived categories in algebraic geometry and commutative algebra. 
Recent applications include the proof, and generalizations, of the Antieau--Gepner--Heller conjecture \cite{Antieau/Gepner/Heller:2019}. 
On a Noetherian algebraic stack, it has been shown that the category of perfect complexes $\operatorname{Perf}$ admits a bounded $t$-structure if, and only if, the algebraic stack is regular \cite{Smith:2022,Neeman:2024,DeDeyn/Lank/ManaliRahul/Peng:2025}.
 
Roughly speaking, a $t$-structure on a triangulated category $\mathcal{T}$ is a pair of subcategories $(\mathcal{T}^{\leq 0}, \mathcal{T}^{\geq 0})$ satisfying certain conditions. This includes an orthogonality constraint and being able to express objects as extensions of the pair. 
In fact, the $t$-structure is completely determined by its \textit{aisle} $\mathcal{T}^{\leq 0}$ \cite{Keller/Vossieck:1988}. 

For suitable algebraic stacks, it is known that the standard aisle $D^{\leq 0}_{\operatorname{qc}}(\mathcal{X})$ is `equivalent' to a $t$-structure that is singly compactly generated. Here, equivalent means in the sense of \cite[Definition 0.14]{Neeman:2025}. See \cite[Theorem 3.2(iii)]{Neeman:2024} for the case of schemes, and \cite[Proposition 6.6]{DeDeyn/Lank/ManaliRahul/Peng:2025} for suitable algebraic stacks.

More recently, it was shown that the standard aisle need not be itself singly compactly generated for many algebraic stacks \cite{Bhaduri/DeDeyn/Hrbek/Lank/ManaliRahul:2025}. 
On the other hand, for Noetherian schemes, the standard aisle is compactly generated by perfect complexes \cite[Proposition 2.3]{Herbera/Hrbek/LeGros:2025}. Beyond this, however, little appears to be known, even for quasi-compact quasi-separated schemes.

\subsection{What we do}
\label{sec:intro_what_we_do}

\subsubsection{Generation}
\label{sec:intro_what_we_do_generation}

Our first result is the following.

\begin{proposition}
    \label{prop:psuedoapprox}
    Let $\mathcal{X}$ be a concentrated algebraic stack with quasi-finite and separated diagonal (resp.\ a Deligne--Mumford $\mathbb{Q}$-stack). Then $D^{\leq 0}_{\operatorname{qc}}(\mathcal{X})$ is compactly generated.
\end{proposition}

Notably, \Cref{prop:psuedoapprox} appears to be new even in the case of quasi-compact quasi-separated schemes. 

\begin{corollary}
    \label{cor:psuedoapprox}
    Let $X$ be a quasi-compact quasi-separated scheme. Then $D^{\leq 0}_{\operatorname{qc}}(X)$ is compactly generated.
\end{corollary}

To prove \Cref{prop:psuedoapprox}, we introduce a notion called \textit{pseudoapproximation}. 
This allows one to approximate bounded above complexes whose highest nonvanishing cohomology sheaf is finitely presented. 
The condition is weaker than approximation by perfect complexes \cite[Theorem 4.1]{Lipman/Neeman:2007} and weaker than approximation by compact objects \cite{Hall/Lamarche/Lank/Peng:2025}. 
We refer the reader to \Cref{sec:generation} for details. 
Our proof proceeds via \'{e}tale d\'{e}vissage, following \cite{Hall/Rydh:2018}. 
The most technical step appears in \Cref{prop:pseudo_approximation_along_etale_nbhd}, which is concerned with glueing pseudoapproximation along \'{e}tale neighborhoods.

\subsubsection{Classification}
\label{sec:intro_what_we_do_classification}

We apply \Cref{prop:psuedoapprox} to obtain a classification of suitable $t$-structures on $D_{\operatorname{qc}}$. Some historical context is helpful.

The study of $t$-structures on schemes has been very active. 
In full generality, a complete classification of $t$-structures is impossible even for affine schemes \cite{Stanley:2010}. 
Consequently, much of the literature has focused on classes of $t$-structures satisfying additional conditions.

For affine Noetherian schemes, a classification of compactly generated t-structures was obtained in \cite{AlsonoTarrio/JeremiasLopez/Saorin:2010}. Subsequently, \cite{Dubey/Sahoo:2023} extended these results to Noetherian schemes under a tensor compatibility assumption. 
The Noetherian hypothesis was later removed in the affine case by \cite{Hrbek:2020}, and those results play a key role in the present work.

These classifications are formulated in terms of `Thomason filtrations'. Namely, these are decreasing functions from the integers to the collection of Thomason subsets of the underlying topological space. 
See \Cref{sec:prelim_Thomason} for details. 
However, no analogous classification appears to be available for algebraic stacks.

To be precise, an aisle $\mathcal{A}$ on $D_{\operatorname{qc}}(\mathcal{X})$ is called \textit{$\otimes$-closed} if $D^{\leq 0}_{\operatorname{qc}}(\mathcal{X})\otimes^{\mathbf{L}} \mathcal{A} \subseteq \mathcal{A}$. Applying \Cref{prop:psuedoapprox}, this is equivalent to tensoring by perfect complexes which belong to the standard aisle. See \Cref{rmk:proxy_aisle} for details.

Using \Cref{prop:psuedoapprox}, we establish the following classification. 

\begin{theorem}
    \label{thm:stacky_DB}
    Let $\mathcal{X}$ be a concentrated algebraic stack with quasi-finite and separated diagonal (resp.\ a Deligne--Mumford $\mathbb{Q}$-stack). There is a one-to-one correspondence:
    \begin{displaymath}
        \begin{aligned}
            \{ \textrm{compactly generated } \otimes\!\textrm{-aisle on } & D_{\operatorname{qc}}(\mathcal{X})\} 
            \\&\iff \{ \textrm{Thomason filtrations on } \mathcal{X}\}.
        \end{aligned}
    \end{displaymath}
    In particular, for any $\otimes$-aisle on $D_{\operatorname{qc}}(\mathcal{X})$ generated by $\mathcal{P} \subseteq \operatorname{Perf}(\mathcal{X})$, we assign to it the Thomason filtration given by the rule (which is independent of the choice for $\mathcal{P}$)
    \begin{displaymath}
        n\mapsto \bigcup_{\substack{j\geq n\\ P\in \mathcal{P}}} \operatorname{supp}(\mathcal{H}^j (P)).
    \end{displaymath}
\end{theorem}

An important feature of \Cref{thm:stacky_DB} is that using only the affine results of \cite{Hrbek:2020}, we can go directly to algebraic stacks without any dependence on \cite{Dubey/Sahoo:2023}. 
Furthermore, even for quasi-compact quasi-separated schemes, no such classification has been made.

As a first step towards \Cref{thm:stacky_DB}, we show for an affine scheme that every Thomason filtration is supported by perfects. 
Then we establish an auxiliary result (see \Cref{lem:pullback_t_exactness}). 
It allows us to better understand such $t$-structures by passing to an affine scheme that is a smooth cover of the algebraic stack. 
This immediately enables us to bootstrap the classification known in the affine setting.

\begin{ack}
Hrbek and Pizzirani were supported by the project LQ100192601 Lumina quaeruntur, funded by the Czech Academy of Sciences (RVO 67985840). Lank was supported under the ERC Advanced Grant 101095900-TriCatApp, by the National Science Foundation under Grant No.\ DMS-1928930 while in residence at the Simons Laufer Mathematical Sciences Institute (formerly MSRI) in Spring 2024, and thanks the Institute of Mathematics of the Czech Academy of Sciences for its hospitality while conducting parts of this research. The authors greatly appreciate discussions with Alexander Clark, Timothy De Deyn, Giovanna Le Gros, Kabeer Manali Rahul, Chris J.\ Parker, and Sergio Pavon. 
\end{ack}

\section{Preliminaries}
\label{sec:prelim}

\subsection{Algebraic stacks}
\label{sec:prelim_stacks}

We follow the conventions of \cite{StacksProject} for algebraic stacks. 
Symbols such as $X, Y, \ldots$ denote schemes, and $\mathcal{X}, \mathcal{Y}, \ldots$ denote algebraic stacks. 
However, for the derived pullback and pushforward adjunction, we follow \cite[\S1]{Hall/Rydh:2017} and \cite{Olsson:2007a,Laszlo/Olsson:2008a,Laszlo/Olsson:2008b}. 
Let $\mathcal{X}$ be a quasi-compact quasi-separated algebraic stack. 

\subsubsection{Categories}
\label{sec:prelim_stacks_cats}

Define $\operatorname{Mod}(\mathcal{X})$ to be the Grothendieck abelian category of $\mathcal{O}_\mathcal{X}$-modules on the lisse-\'{e}tale topos, and denote by $\operatorname{Qcoh}(\mathcal{X})$ its subcategory of quasi-coherent modules. Set $D(\mathcal{X}) = D(\operatorname{Mod}(\mathcal{X}))$ to be the derived category of $\operatorname{Mod}(\mathcal{X})$, and $D_{\operatorname{qc}}(\mathcal{X})$ its strictly full subcategory of complexes with quasi-coherent cohomology. 

\subsubsection{Concentratedness}
\label{sec:prelim_stacks_concentratedness}

Let $f\colon \mathcal{Y} \to \mathcal{X}$ be a quasi-compact quasi-separated morphism. The formulation of derived pushforward $\mathbf{R}f_\ast$ and pullback functors $\mathbf{L}f^\ast$ on $D_{\operatorname{qc}}$ can be found in \cite[\S 1]{Hall/Rydh:2017}. 
We say $f$ is \textbf{concentrated} if, for any base change along a quasi-compact quasi-separated algebraic stack, $\mathbf{R}f_\ast$ has finite cohomological dimension (see \cite[Lemma 2.5 \& Theorem 2.6]{Hall/Rydh:2017}). 
Furthermore, $\mathcal{X}$ is called \textbf{concentrated} if its structure morphism $\mathcal{X} \to \operatorname{Spec}(\mathbb{Z})$ is concentrated.

\begin{example}
    \label{ex:stacks_concentrated}
    \hfill
    \begin{enumerate}
        \item Let $G$ be a group scheme of finite type over a field $k$. 
        A full classification of when the classifying stack $B_k G$ is concentrated can be found in \cite[Theorem B]{Hall/Rydh:2015}.
        \item Any Noetherian $\mathbb{Q}$-algebraic stack with affine stabilizer groups is concentrated \cite[Theorem 1.4.2]{Drifield/Gaitsgory:2013}. 
        This was generalized to positive characteristic in \cite[Theorem C]{Hall/Rydh:2015}.
        \item More generally, any quasi-compact quasi-separated tame Deligne--Mumford stack is concentrated if it has affine or quasi-compact separated diagonal. 
        Also, any algebraic $\mathbb{Q}$-stack of $s$-global type\footnote{From \cite[\S 2]{Rydh:2015}, $X$ is `$s$-global type' if it is \'{e}tale-locally the quotient of a quasi-affine scheme by
        $\operatorname{GL}_N$ for some $N$.} is concentrated. 
        These follow from the main results of \cite{Hall/Rydh:2017} if coupled with \cite[Theorem C]{Hall/Rydh:2015}. See \cite[\S.\ Perfect Stacks]{Hall/Rydh:2017} for details.
    \end{enumerate}
\end{example}

\subsubsection{Perfect complexes}
\label{sec:prelim_stacks_perfect}

Denote by $\operatorname{Perf}(\mathcal{X}) \subseteq D_{\operatorname{qc}}(\mathcal{X})$ the triangulated subcategory of $D_{\operatorname{qc}}(\mathcal{X})$ consisting of perfect complexes. 
These are defined as objects (on ringed sites) which locally are bounded complexes whose components are direct summands of finite free modules. 
By \cite[Proposition 3.1]{Hall/Neeman/Rydh:2019}, the perfect complexes need not coincide with compact objects of $D_{\operatorname{qc}}$ (loc.\ cit.\ yields a regular Noetherian algebraic stack, and so it is nonzero perfect complexes). 
However, compacts do coincide with perfects if, and only if, $\mathcal{X}$ is concentrated. 
See \cite[Lemma 4.4(3) \& Remark 4.6]{Hall/Rydh:2017}.

\subsubsection{Support}
\label{sec:prelim_stacks_support}

For any $M \in \operatorname{Qcoh}(\mathcal{X})$, define $\operatorname{supp}(M) := p\big(\operatorname{supp}(p^\ast M)\big) \subseteq |\mathcal{X}|$ where $p \colon U \to \mathcal{X}$ is any smooth surjective morphism from a scheme; this is independent of $p$. 
For $E \in D_{\operatorname{qc}}(\mathcal{X})$, set
\begin{displaymath}
    \operatorname{supp}(E) := \bigcup_{j \in \mathbb{Z}} \operatorname{supp}\big(\mathcal{H}^j(E)\big) \subseteq |\mathcal{X}|.
\end{displaymath}
Given a closed subset $Z \subseteq |\mathcal{X}|$, we say $E$ is \textbf{supported on $Z$} if $\operatorname{supp}(E) \subseteq Z$.

\subsubsection{Compact generation and approximation}
\label{sec:prelim_stacks_compact_gen_approx}

We say $\mathcal{X}$ satisfies the
\textbf{$\beta$-Thomason condition}, for some cardinal $\beta$, if $D_{\operatorname{qc}}(\mathcal{X})$ is compactly generated by a collection of cardinality at most $\beta$ and for each closed subset $Z$ of $|\mathcal{X}|$ with quasi-compact complement, there is a $P\in \operatorname{Perf}(\mathcal{X})$ such that $\operatorname{supp}(P)=Z$. 

\begin{example}
    \label{ex:stacks_Thomason}
    \hfill
    \begin{enumerate}
        \item Any quasi-compact quasi-separated algebraic stack with quasi-finite separated diagonal is $1$-Thomason \cite[Theorem A]{Hall/Rydh:2017}.
        \item Any quasi-compact quasi-separated algebraic $\mathbb{Q}$-stack that is of $s$-global type must be $|\mathbb{Z}|$-Thomason \cite[Theorem B]{Hall/Rydh:2017} (i.e.\ countably generated).
    \end{enumerate}
\end{example}



\subsubsection{Mayer--Vietoris squares for algebraic stacks}
\label{sec:prelim_MV}

Consider a fibered square of quasi-compact quasi-separated algebraic stacks
\begin{equation}
    \label{eq:MV}    
    \begin{tikzcd}
        {\mathcal{U}^\prime} & {\mathcal{X}^\prime} \\
        {\mathcal{U}} & {\mathcal{X}}
        \arrow["{j^\prime}", from=1-1, to=1-2]
        \arrow["{f^\prime}"', from=1-1, to=2-1]
        \arrow["f", from=1-2, to=2-2]
        \arrow["j"', from=2-1, to=2-2]
    \end{tikzcd}
\end{equation}
where $j$ is an open immersion and $f$ is a concentrated flat morphism. 
We say that \eqref{eq:MV} is a \textbf{flat Mayer--Vietoris square} if for every morphism $\mathcal{W}\to \mathcal{X}$ with topological image disjoint from $|\mathcal{U}|$, the induced morphism $\mathcal{X}^\prime \times_{\mathcal{X}} \mathcal{W} \to \mathcal{W}$ is an isomorphism. See \cite[Definition 1.2]{Hall/Rydh:2023}. 
Note loc.\ cit.\ does not require concentrated. 
Instead, we impose this condition because it will be needed in later arguments. 
Moreover, we say \eqref{eq:MV} is an \textbf{\'{e}tale neighbourhood} if $f$ is \'{e}tale and the base change of $f$ along the closed immersion $\mathcal{Z}_{red} \to \mathcal{X}$ is an isomorphism where $|\mathcal{Z}_{red}| = |\mathcal{X}|\setminus |\mathcal{U}|$ (cf.\ \cite{Rydh:2011}). 
By \cite[Lemma 2.1]{Rydh:2011}, being an \'{e}tale neighborhood is equivalent to the condition $f$ is \'{e}tale and that \eqref{eq:MV} is a flat Mayer--Vietoris square. 
In this case, \cite[Proposition 4.2]{Hall/Rydh:2023} says that $f$ induces an adjoint $t$-exact equivalence,
\begin{displaymath}
    D_{\operatorname{qc},\mathcal{Z}}(\mathcal{X}) 
    \cong D_{\operatorname{qc},\mathcal{Z}^\prime}(\mathcal{X}^\prime). 
\end{displaymath}
Here, $t$-exactness refers to the respective standard $t$-structures. 
Specifically, the adjoint equivalence is given by the restrictions of $\mathbf{L}f^\ast$ and $\mathbf{R}f_\ast$. 
Moreover, if $P$ is in $D_{\operatorname{qc},\mathcal{Z}^\prime}(\mathcal{X}^\prime) \cap D_{\operatorname{qc}}(\mathcal{X}^\prime)^c$, then $\mathbf{R}f_\ast P$ is in $D_{\operatorname{qc},\mathcal{Z}}(\mathcal{X}) \cap D_{\operatorname{qc}}(\mathcal{X})^c$ \cite[Lemma 5.9]{Hall/Rydh:2017}. 
In particular, if $E \in D_{\operatorname{qc}}(\mathcal{X})$ there is an induced long exact sequence:
\begin{equation}
    \label{eq:fmv-LES}
    \begin{aligned}
        \cdots 
        &\to \operatorname{Ext}^d_{\mathcal{O}_{\mathcal{X}}}(E,M)
        \to \substack{\operatorname{Ext}^d_{\mathcal{O}_{\mathcal{X}^\prime}}(\mathbf{L}f^\ast E,\mathbf{L}f^\ast M) \\ \bigoplus \\  \operatorname{Ext}^d_{\mathcal{O}_{\mathcal{U}}}(\mathbf{L}j^\ast E,\mathbf{L}j^\ast M)}
        \\&\to \operatorname{Ext}^d_{\mathcal{O}_{\mathcal{U}^\prime}}(\mathbf{L}k^\ast E,\mathbf{L} k^\ast M) 
        \to \operatorname{Ext}^{d+1}_{\mathcal{O}_{\mathcal{X}}}(E,M) 
        \to \cdots
    \end{aligned}
\end{equation}
where $k= j \circ f^\prime$.

\subsection{\texorpdfstring{$t$}{t}-structures}
\label{sec:prelim_t-structures}

We recall $t$-structures on triangulated categories and their tensor variants. 
Let $\mathcal{T}$ be a triangulated category admitting small coproducts. 
Its subcategory of compact objects is denoted by $\mathcal{T}^c$, consisting of those $c \in \mathcal{T}$ for which $\operatorname{Hom}(c,-)$ preserves coproducts.

\subsubsection{\texorpdfstring{$t$}{t}-structures}
\label{sec:prelim_t-structures_t_structures}

A pair of strictly full subcategories $(\mathcal{T}^{\leq 0}, \mathcal{T}^{\geq 0})$ of $\mathcal{T}$ is a \textbf{$t$-structure} if:
\begin{itemize}
    \item $\operatorname{Hom}(A,B) = 0$ for all $A \in \mathcal{T}^{\leq 0}$ and $B \in \mathcal{T}^{\geq 0}[-1]$,
    \item $\mathcal{T}^{\leq 0}[1] \subseteq \mathcal{T}^{\leq 0}$ and $\mathcal{T}^{\geq 0}[-1] \subseteq \mathcal{T}^{\geq 0}$,
    \item for every $E \in \mathcal{T}$, there is a distinguished triangle
    \begin{displaymath}
        \tau^{\leq 0} E \to E \to \tau^{\geq 1} E \to (\tau^{\leq 0} E)[1]
    \end{displaymath}
    with $\tau^{\leq 0} E \in \mathcal{T}^{\leq 0}$ and $\tau^{\geq 1} E \in \mathcal{T}^{\geq 0}[-1]$.
\end{itemize}
This notion was introduced in \cite{Beilinson/Berstein/Deligne/Gabber:2018}. 
The above triangle is called the \textbf{truncation triangle} of $E$ with respect to the $t$-structure. 
For any $n \in \mathbb{Z}$, define $\mathcal{T}^{\leq n} := \mathcal{T}^{\leq 0}[-n]$ and $\mathcal{T}^{\geq n} := \mathcal{T}^{\geq 0}[-n]$. 
Then $(\mathcal{T}^{\leq n}, \mathcal{T}^{\geq n})$ is also a $t$-structure on $\mathcal{T}$.

\begin{example}
    Let $\mathcal{X}$ be a quasi-compact quasi-separated algebraic stack. Consider the following categories
    \begin{itemize}
        \item $D^{\leq 0}(\mathcal{X})$ consists of objects $E\in D(\mathcal{X})$ such that $\mathcal{H}^j (E)=0$ for $j >0$
        \item $D^{\geq 0}(\mathcal{X})$ consists of objects $E\in D(\mathcal{X})$ such that $\mathcal{H}^j (E)=0$ for $j <0$. 
    \end{itemize}
    The pair $(D^{\leq 0}(\mathcal{X}), D^{\geq 0}(\mathcal{X}))$ is called the \textbf{standard $t$-structure} on $D(\mathcal{X})$. 
    From \cite[\href{https://stacks.math.columbia.edu/tag/06WU}{Tag 06WU}]{StacksProject}, the natural functor $\operatorname{Qcoh}(\mathcal{X}) \to \operatorname{Mod}(\mathcal{X})$ is fully faithful (but not exact generally). 
    It follows that the standard $t$-structure induces a $t$-structure on $D_{\operatorname{qc}}(\mathcal{X})$. 
    Specifically, the pair
    $(D^{\leq 0}(\mathcal{X})\cap D_{\operatorname{qc}}(\mathcal{X}), D^{\geq 0}(\mathcal{X})\cap D_{\operatorname{qc}}(\mathcal{X}))$ is called the \textbf{standard $t$-structure} on $D_{\operatorname{qc}}(\mathcal{X})$. 
    To simplify, the induced $t$-structure is denoted $(D^{\leq 0}_{\operatorname{qc}}(\mathcal{X}), D^{\geq 0}_{\operatorname{qc}}(\mathcal{X}))$.
\end{example}

\subsubsection{(Pre)aisles}
\label{sec:prelim_t-structures_aisles}

A strictly full subcategory $\mathcal{A} \subseteq \mathcal{T}$ is an \textbf{aisle} if it is closed under positive shifts and extensions, and the inclusion $\mathcal{A} \hookrightarrow \mathcal{T}$ admits a right adjoint. 
Given a $t$-structure $(\mathcal{T}^{\leq 0}, \mathcal{T}^{\geq 0})$, the subcategory $\mathcal{T}^{\leq 0}$ is an aisle. 
By \cite{Keller/Vossieck:1988}, any aisle $\mathcal{A}$ determines a $t$-structure $(\mathcal{A}, \mathcal{A}^\perp[1])$, where
\begin{displaymath}
    \mathcal{A}^\perp := \{ T \in \mathcal{T} \mid \operatorname{Hom}(A,T) = 0 \text{ for all } A \in \mathcal{A} \}.
\end{displaymath}
The subcategories $\mathcal{T}^{\leq 0}$ and $\mathcal{T}^{\geq 0}$ are called the \textbf{aisle} and \textbf{coaisle} of the $t$-structure, respectively. 
More generally, a \textbf{preaisle} is a strictly full subcategory closed under positive shifts and extensions. 
A preaisle $\mathcal{A} \subseteq \mathcal{T}$ is called \textbf{cocomplete} if it is closed under all coproducts in $\mathcal{T}$. 
Given $\mathcal{S}\subseteq \mathcal{T}$, $\overline{\langle \mathcal{S} \rangle}^{(-\infty, 0]}$ is defined to be the smallest cocomplete preaisle containing $\mathcal{S}$. 
By \cite[Theorem 2.3]{Neeman:2021}, if $\mathcal{T}$ is well generated (e.g.\ compactly generated), $\overline{\langle \mathcal{S} \rangle}^{(-\infty, 0]}$ is an aisle whenever $\mathcal{S}$ is essentially small. 
An aisle $\mathcal{U}$ on $\mathcal{T}$ is \textbf{compactly generated} when there exists a collection of compact objects $\mathcal{P} \subseteq \mathcal{T}^c$ satisfying $\overline{\langle \mathcal{P} \rangle}^{(-\infty, 0]} = \mathcal{U}$. 
Hence, we say a $t$-structure is \textbf{compactly generated} if its aisle is as such.  

\subsubsection{\texorpdfstring{$t$}{t}-exactness}
\label{sec:prelim_t-structures_t_exactness}

Let $F \colon \mathcal{T} \to \mathcal{T}^\prime$ be an exact functor between triangulated categories equipped with $t$-structures $(\mathcal{T}^{\leq 0}, \mathcal{T}^{\geq 0})$ and $((\mathcal{T}^\prime)^{\leq 0}, (\mathcal{T}^\prime)^{\geq 0})$. 
We say that $F$ is \textbf{right $t$-exact} if $
F(\mathcal{T}^{\leq 0}) \subseteq (\mathcal{T}^\prime)^{\leq 0}$,
and \textbf{left $t$-exact} if 
$F(\mathcal{T}^{\geq 0}) \subseteq (\mathcal{T}^\prime)^{\geq 0}$. 
If both conditions hold, then $F$ is \textbf{$t$-exact}.
For example, if $f \colon U \to \mathcal{X}$ is a flat morphism from an affine Noetherian scheme to a Noetherian algebraic stack, the derived pullback functor $\mathbf{L}f^\ast \colon D_{\operatorname{qc}}(\mathcal{X}) \to D_{\operatorname{qc}}(U)$ is $t$-exact with respect to the standard $t$-structures. 
At times, we might write a (resp.\ right, left) $t$-exact functor as $F\colon (\mathcal{T},\mathcal{T}^{\leq 0}) \to (\mathcal{T}^\prime ,(\mathcal{T}^\prime)^{\leq 0})$ where only the aisle is made clear.

\subsubsection{Tensor variants}
\label{sec:prelim_t-structures_tensor}

We discuss a notion of aisles via tensor actions of triangulated categories. 
See \cite[\S 3]{Stevenson:2013} for the definition of tensor action. 
The following appears in \cite[\S 4]{Hrbek/Lank/LeGros/Pavon:2026}. 
Let $(\mathcal{T},\otimes,\mathbf{1})$ be a rigidly compactly generated tensor triangulated category and $\mathcal{K}$ be a compactly generated triangulated category. 
Suppose $\odot\colon \mathcal{T}\times\mathcal{K}\to \mathcal{K}$ is an action of $\mathcal{T}$ on $\mathcal{K}$. 
Note that $\odot$ is exact and coproduct preserving in both variables. 
By \cite[Lemma 4.6]{Stevenson:2013}, the action restricts to an action $\odot\colon \mathcal{T}^c\times\mathcal{K}^c\to \mathcal{K}^c$.  
Fix a preaisle $\mathcal{P}^{\leq 0}\subseteq\mathcal{T}^c$ satisfying
$\mathbf{1}\in\mathcal{P}^{\leq 0}$ and $\mathcal{P}^{\leq 0}\otimes\mathcal{P}^{\leq 0}\subseteq\mathcal{P}^{\leq 0}$. 
Set $\mathcal{T}^{\leq 0}\colonequals \overline{\langle \mathcal{P}^{\leq 0} \rangle}^{(-\infty, 0]}$ to be the aisle generated by $\mathcal{P}^{\leq 0}$.  
We say that a preaisle $\mathcal{U}\subseteq\mathcal{K}$ is a \textbf{$\odot$-preaisle} if $\mathcal{P}^{\leq 0}\odot\mathcal{U}\subseteq\mathcal{U}$. 
By \cite[Lemma 2.6]{Hrbek/Lank/LeGros/Pavon:2026}, for a cocomplete preaisle  $\mathcal{A}\subseteq\mathcal{K}$, $\mathcal{A}$ is a $\odot$-preaisle if, and only if, $\mathcal{T}^{\leq 0}\odot \mathcal{A}\subseteq\mathcal{A}$. 
Let $\mathcal{S}\subseteq\mathcal{K}$ be a set of objects. 
Denote by $\overline{\langle \mathcal{S} \rangle}^{(-\infty, 0]}_\odot$ the smallest cocomplete $\odot$-preaisle of $\mathcal{K}$ containing $\mathcal{S}$ (which is obtained by intersecting all of them). Given a skeletally small subcategory $\mathcal{S}\subseteq\mathcal{K}$, $\overline{\langle \mathcal{S} \rangle}^{(-\infty, 0]}_\odot = \overline{\langle \mathcal{P}^{\leq 0}\odot\mathcal{S} \rangle}^{(-\infty, 0]}$. 
In fact, $\overline{\langle \mathcal{S} \rangle}^{(-\infty, 0]}_\odot$ is an aisle. 
See \cite[Lemma 2.8]{Hrbek/Lank/LeGros/Pavon:2026}.


\begin{lemma}
    \label{lem:dubey_sahoo_corrected}
    Let $(\mathcal{T},\otimes,1)$ be a rigidly compactly generated tensor triangulated category.  Suppose $\mathcal{T}^{\leq 0}$ is a compactly generated preaisle. Assume that $1\in \mathcal{K}:= \mathcal{T}^c \cap \mathcal{T}^{\leq 0}$. For any $\mathcal{S}\subseteq \mathcal{T}^c$ and $A\in \overline{\langle \mathcal{S} \rangle}^{(-\infty,0]}_{\otimes}$, there is a distinguished triangle 
    \begin{displaymath}
        \bigsqcup_{i\geq 0} A_i \to A \to (\bigsqcup_{i\geq 0} A_i)[1] \to (\bigsqcup_{i\geq 0} A_i)[1]
    \end{displaymath}
    where $A_i$ are $i$-fold extensions of small coproducts of nonnegative shifts of objects in $\mathcal{K}\otimes \mathcal{S}$.
\end{lemma}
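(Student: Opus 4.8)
The plan is to reduce everything to the classical construction of compactly generated aisles, applied to the \emph{set} of compact objects $\mathcal{K}\otimes\mathcal{S}$, through the identification $\operatorname{preaisle}_{\otimes}(\mathcal{S})=\operatorname{preaisle}(\mathcal{K}\otimes\mathcal{S})$ (cf.\ \cite{Dubey/Sahoo:2023}). First I would record the structural facts needed. As $\mathcal{T}$ is rigidly compactly generated, $\mathcal{T}^c$ is an essentially small thick tensor subcategory, so $\mathcal{K}\otimes\mathcal{K}$ and $\mathcal{K}\otimes\mathcal{S}$ are sets of compact objects. Since ``tensor'' is taken with respect to the preaisle $\mathcal{T}^{\leq 0}$ we have $\mathcal{T}^{\leq 0}\otimes\mathcal{T}^{\leq 0}\subseteq\mathcal{T}^{\leq 0}$, hence $\mathcal{K}\otimes\mathcal{K}\subseteq\mathcal{T}^c\cap\mathcal{T}^{\leq 0}=\mathcal{K}$. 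Since $\mathcal{T}^{\leq 0}$ is a compactly generated preaisle, $\mathcal{T}^{\leq 0}=\operatorname{preaisle}(\mathcal{G})$ for some set $\mathcal{G}\subseteq\mathcal{T}^c$; because $\mathcal{G}\subseteq\mathcal{K}\subseteq\mathcal{T}^{\leq 0}$ this forces $\mathcal{T}^{\leq 0}=\operatorname{preaisle}(\mathcal{K})$. Finally I would isolate the elementary bootstrapping principle used twice below: for a fixed object $V$ (resp.\ a fixed compact $K$) the functor $-\otimes V$ (resp.\ $K\otimes-$) is exact and preserves small coproducts, so the preimage under it of a cocomplete preaisle is again a cocomplete preaisle. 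Consequently a containment $\mathcal{K}\otimes\mathcal{C}\subseteq\mathcal{V}$ on a generating set $\mathcal{C}$ of a cocomplete preaisle propagates to $\mathcal{K}\otimes\operatorname{preaisle}(\mathcal{C})\subseteq\mathcal{V}$, and $\mathcal{K}\otimes\mathcal{V}\subseteq\mathcal{V}$ for a cocomplete preaisle $\mathcal{V}$ already gives $\mathcal{T}^{\leq 0}\otimes\mathcal{V}\subseteq\mathcal{V}$, since $\mathcal{T}^{\leq 0}=\operatorname{preaisle}(\mathcal{K})$.

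For part (1): since $1\in\mathcal{K}$ we have $\mathcal{S}=1\otimes\mathcal{S}\subseteq\mathcal{K}\otimes\mathcal{S}$, so $\operatorname{preaisle}(\mathcal{K}\otimes\mathcal{S})$ contains $\mathcal{S}$. It is a $\otimes$-preaisle: by the bootstrapping principle it is enough to check $\mathcal{K}\otimes(\mathcal{K}\otimes\mathcal{S})\subseteq\operatorname{preaisle}(\mathcal{K}\otimes\mathcal{S})$, and this holds because $(\mathcal{K}\otimes\mathcal{K})\otimes\mathcal{S}\subseteq\mathcal{K}\otimes\mathcal{S}$ by $\mathcal{K}\otimes\mathcal{K}\subseteq\mathcal{K}$. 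Hence $\operatorname{preaisle}_{\otimes}(\mathcal{S})\subseteq\operatorname{preaisle}(\mathcal{K}\otimes\mathcal{S})$. Conversely, any $\otimes$-preaisle $\mathcal{U}$ containing $\mathcal{S}$ satisfies $\mathcal{K}\otimes\mathcal{S}\subseteq\mathcal{T}^{\leq 0}\otimes\mathcal{U}\subseteq\mathcal{U}$, and being a cocomplete preaisle it then contains $\operatorname{preaisle}(\mathcal{K}\otimes\mathcal{S})$; taking $\mathcal{U}=\operatorname{preaisle}_{\otimes}(\mathcal{S})$ yields the reverse inclusion. Thus $\operatorname{preaisle}_{\otimes}(\mathcal{S})=\operatorname{preaisle}(\mathcal{K}\otimes\mathcal{S})$, and since $\mathcal{K}\otimes\mathcal{S}$ is a set of compact objects the cocomplete preaisle it generates is an aisle by the standard construction of compactly generated $t$-structures (see e.g.\ \cite{Keller/Vossieck:1988,AlsonoTarrio/JeremiasLopez/Saorin:2010}); in particular $\operatorname{aisle}_{\otimes}(\mathcal{S})$ is defined and equals $\operatorname{aisle}(\mathcal{K}\otimes\mathcal{S})$.

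For part (2): put $\mathcal{G}:=\mathcal{K}\otimes\mathcal{S}\subseteq\mathcal{T}^c$, so $\operatorname{aisle}_{\otimes}(\mathcal{S})=\operatorname{aisle}(\mathcal{G})$ by part (1). I would then invoke the explicit description of the objects of a compactly generated aisle: every $A\in\operatorname{aisle}(\mathcal{G})$ is the homotopy colimit of a tower $A_0\to A_1\to A_2\to\cdots$ in which $A_0$ is a small coproduct of nonnegative shifts of objects of $\mathcal{G}$ and, for each $i\geq 1$, there is a triangle $A_{i-1}\to A_i\to C_i\to A_{i-1}[1]$ with $C_i$ again a small coproduct of nonnegative shifts of objects of $\mathcal{G}$; so $A_i$ is an $i$-fold extension of small coproducts of nonnegative shifts of objects of $\mathcal{G}=\mathcal{K}\otimes\mathcal{S}$. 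Feeding $A=\operatorname{hocolim}_i A_i$ into the triangle $\bigsqcup_{i\geq 0}A_i\xrightarrow{\ \mathrm{id}-\mathrm{shift}\ }\bigsqcup_{i\geq 0}A_i\to A\to(\bigsqcup_{i\geq 0}A_i)[1]$ defining the homotopy colimit, and rotating once, produces the asserted distinguished triangle $\bigsqcup_{i\geq 0}A_i\to A\to(\bigsqcup_{i\geq 0}A_i)[1]\to(\bigsqcup_{i\geq 0}A_i)[1]$.

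The one place where something genuinely has to be checked, rather than quoted, is the identification $\operatorname{preaisle}_{\otimes}(\mathcal{S})=\operatorname{preaisle}(\mathcal{K}\otimes\mathcal{S})$ in part (1) — that is, transferring the defining closure condition of a $\otimes$-preaisle (closure under $\otimes$ with the large category $\mathcal{T}^{\leq 0}$) down to the compact generators $\mathcal{K}$. This rests on $\mathcal{T}^{\leq 0}=\operatorname{preaisle}(\mathcal{K})$ together with $\mathcal{K}\otimes\mathcal{K}\subseteq\mathcal{K}$, and the subtlety to flag is that the latter (equivalently, that $\mathcal{T}^{\leq 0}$ is itself a $\otimes$-preaisle) is part of the standing setup and is not deducible from $1\in\mathcal{K}$ alone. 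Once this identification is available, both parts drop out of the classical compactly generated $t$-structure machinery.
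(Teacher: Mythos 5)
Your argument is correct, but it is worth noting that the paper does not actually prove this lemma at all: its ``proof'' is a one-line citation of \cite[Proposition 3.11]{Dubey/Sahoo:2023}, with the statement amended by adding the hypothesis $1\in\mathcal{K}=\mathcal{T}^c\cap\mathcal{T}^{\leq 0}$. What you have written is essentially a reconstruction of that cited argument: the identification $\operatorname{preaisle}_{\otimes}(\mathcal{S})=\operatorname{preaisle}(\mathcal{K}\otimes\mathcal{S})$, followed by the classical machinery for aisles generated by a set of compact objects (existence of the right adjoint via the tower/homotopy-colimit construction, which also yields the filtration in part (2) after one rotation of the hocolim triangle, exactly as you describe). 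The added value of your write-up is that it makes the role of the corrected hypothesis transparent: $1\in\mathcal{K}$ is used precisely to get $\mathcal{S}=1\otimes\mathcal{S}\subseteq\mathcal{K}\otimes\mathcal{S}$, without which $\operatorname{preaisle}(\mathcal{K}\otimes\mathcal{S})$ need not contain $\mathcal{S}$ and the two-sided comparison collapses --- this is the very point the paper's correction addresses. Your flag that $\mathcal{T}^{\leq 0}\otimes\mathcal{T}^{\leq 0}\subseteq\mathcal{T}^{\leq 0}$ (i.e.\ that the reference preaisle is itself tensor-compatible, as in \Cref{sec:prelim_t-structures_tensor}) is a standing convention rather than a consequence of $1\in\mathcal{K}$ is also accurate and consistent with how the lemma is applied to the standard aisle $D^{\leq 0}_{\operatorname{qc}}(\mathcal{X})$. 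The only cosmetic quibble is the indexing in part (2): your $A_0$ is a single coproduct of nonnegative shifts of objects of $\mathcal{K}\otimes\mathcal{S}$, so the ``$i$-fold extension'' count should start there, but this does not affect the argument.
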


\begin{proof}
    This is exactly \cite[Proposition 3.11]{Dubey/Sahoo:2023} with the correction that $1\in \mathcal{K}:= \mathcal{T}^c \cap \mathcal{T}^{\leq 0}$ is added as a hypothesis.
\end{proof}

\subsection{Thomason filtrations}
\label{sec:prelim_Thomason}

We recall Thomason filtrations on a topological space $X$. 
Let $\operatorname{Spcl}(X)$ be the collection of specialization closed subsets of $X$. 
Recall that $Z\subseteq X$ is called a \textbf{Thomason subset} of $X$ if $Z=\cup_{\alpha \in \Lambda} Z_{\alpha}$ where each $Z_{\alpha}$ is closed in $X$ and $X\setminus Z_{\alpha}$ is quasi-compact. 
Denote by $\operatorname{Thom}(X)$ the collection of Thomason subsets of $X$. 
If $X$ is a Noetherian topological space, then specialization closed subsets coincide with Thomason subsets. 
A function $\phi \colon \mathbb{Z} \to \operatorname{Thom}(X)$ is called a \textbf{Thomason filtration} on $X$ if $\phi(n+1)\subseteq \phi (n)$ for each $n\in \mathbb{Z}$. 
If $X=|\mathcal{X}|$ where $\mathcal{X}$ is an algebraic stack, then we abuse language and call these `Thomason filtrations on $\mathcal{X}$'.

\section{Generation}
\label{sec:generation}

\begin{definition}
    \label{def:approximation}
    Let $\mathcal{X}$ be an algebraic stack. 
    Consider a pair $(Z, E)$  consisting of a closed subset $Z\subseteq|\mathcal{X}|$ and a complex $E\in D^{\leq 0}_{\operatorname{qc},Z}(\mathcal{X})$ such that $\mathcal{H}^0 (E)$ is finitely presented. 
    We say that \textbf{pseudoapproximation by compact complexes holds for $(Z, E)$} if there exist $P\in D^{\leq 0}_{\operatorname{qc},Z}(\mathcal{X}) \cap D_{\operatorname{qc}}(\mathcal{X})^c$ and 
    $P\to E$ such that $\mathcal{H}^0 (P) \to \mathcal{H}^0 (E)$ is surjective. 
    Moreover, we say that \textbf{pseudoapproximation by compact complexes holds
    for $\mathcal{X}$} if for every quasi-compact open immersion $\mathcal{U}\to \mathcal{X}$, pseudoapproximation by compact complexes
    holds for any pair of the form $(Z, E)$ where $Z:=|\mathcal{X}| \setminus |\mathcal{U}|$ and $E\in D^{\leq 0}_{\operatorname{qc},Z}(\mathcal{X})$ such that $\mathcal{H}^0 (E)$ is finitely presented.
\end{definition}

\begin{example}
    \label{ex:quasi-affine_pseudoapprox}
    An affine scheme satisfies pseudoapproximation by compacts. 
    In fact, any quasi-affine scheme does as well. 
    Indeed, it satisfies the resolution property. 
    More generally, let $\mathcal{X}$ be quasi-compact quasi-separated algebraic stack with finite stabilizer groups. 
    Then $\mathcal{X}$ having the resolution property is equivalent to $\mathcal{X}$ being isomorphic to a quotient stack of the form $[U/\operatorname{GL}_{n,\mathbb{Z}}]$ where $U$ is some quasi-affine scheme. 
    This is \cite[Proposition 7.3]{Hall/Rydh:2017}. See Example 7.5 of loc.\ cit.\ for concrete cases where it applies.
\end{example}

\begin{lemma}
    \label{lem:approx_by_comorphismcts_implies_Thomason}
    Let $\mathcal{X}$ be a concentrated algebraic stack quasi-finite and separated diagonal (resp.\ a Deligne--Mumford $\mathbb{Q}$-stack). 
    If $\mathcal{X}$ satisfies pseudoapproximation by compacts, then $D^{\leq 0}_{\operatorname{qc},Z}(\mathcal{X})$ is compactly generated for any closed $Z\subseteq |\mathcal{X}|$. 
\end{lemma}

\begin{proof}
    Fix a closed subset $Z$ of $|\mathcal{X}|$. Set $\mathcal{P}=D_{\operatorname{qc},Z}(\mathcal{X})^c \cap D^{\leq 0}_{\operatorname{qc},Z}(\mathcal{X})$. 
    By construction, $\overline{\langle \mathcal{P} \rangle}^{(-\infty,0]} \subseteq D^{\leq 0}_{\operatorname{qc},Z}(\mathcal{X})$. 
    We prove the reverse inclusion. 
    This is equivalent to showing that $(\overline{\langle \mathcal{P} \rangle}^{(-\infty,0]})^\perp \subseteq D^{\geq 0}_{\operatorname{qc},Z}(\mathcal{X})$, which will be done by contradiction.

    Let $G$ be a compact generator for $D_{\operatorname{qc},Z}(\mathcal{X})$. 
    If necessary, we can impose that $G\in D^{\leq 0}_{\operatorname{qc},Z}(\mathcal{X})$. 
    Before proving the claim, we make the observation that $(\overline{\langle \mathcal{P} \rangle}^{(-\infty,0]})^\perp \subseteq D^+_{\operatorname{qc},Z}(\mathcal{X})$. 
    Indeed, if $G\in D^{\leq 0}_{\operatorname{qc},Z}(\mathcal{X})$, then $\overline{\langle G \rangle}^{(-\infty,0]} \subseteq \overline{\langle \mathcal{P} \rangle}^{(-\infty,0]}$. 
    By \cite[Proposition 6.6]{DeDeyn/Lank/ManaliRahul/Peng:2025}, there exists $n\geq 0$ such that 
    \begin{displaymath}
        D^{\leq -n}_{\operatorname{qc},Z}(\mathcal{X}) 
        \subseteq \overline{\langle G \rangle}^{(-\infty,0]} 
        \subseteq D^{\leq n}_{\operatorname{qc},Z}(\mathcal{X}).
    \end{displaymath}
    Hence, $D^{\leq -n}_{\operatorname{qc},Z}(\mathcal{X}) \subseteq \overline{\langle \mathcal{P} \rangle}^{(-\infty,0]}$, and so,
    \begin{displaymath}
        (\overline{\langle \mathcal{P} \rangle}^{(-\infty,0]})^\perp 
        \subseteq D^{\geq -n}_{\operatorname{qc},Z}(\mathcal{X}) 
        \subseteq D^+_{\operatorname{qc},Z}(\mathcal{X}).
    \end{displaymath}

    Now, we prove that $D^{\leq 0}_{\operatorname{qc},Z}(\mathcal{X})$ is compactly generated. 
    By assumption, $(\overline{\langle \mathcal{P} \rangle}^{(-\infty,0]})^\perp \not\subseteq D^{\geq 0}_{\operatorname{qc},Z}(\mathcal{X})$. 
    This gives us some nonzero $E\in (\overline{\langle \mathcal{P} \rangle}^{(-\infty,0]})^\perp$ such that $E \not\in D^{\geq 0}_{\operatorname{qc},Z}(\mathcal{X})$. 
    Since $(\overline{\langle \mathcal{P} \rangle}^{(-\infty,0]})^\perp \subseteq D^+_{\operatorname{qc},Z}(\mathcal{X})$, there exists a minimal $s\leq 0$ such that $\mathcal{H}^s (E)\not=0$. 
    By \cite[Corollary B]{Rydh:2023}, there exists finitely presented $\mathcal{O}_{\mathcal{X}}$-modules $E_i$ such that $\mathcal{H}^s (E)$ is the directed colimit of the $E_i$. 
    Consequently, we can find some $E_t$ such that $\operatorname{Hom}(E_t,\mathcal{H}^s (E))\not=0$. 
    As $\mathcal{X}$ satisfies pseudoapproximation by compacts, there exists $P\in D^{\leq 0}_{\operatorname{qc},Z}(\mathcal{X}) \cap D_{\operatorname{qc}, Z}(\mathcal{X})^c$ and $P\to E_t$ such that $\mathcal{H}^0 (P) \to \mathcal{H}^0 (E_t)$ is surjective. 
    Thus, we obtain
    \begin{displaymath}
        \begin{aligned}
            \operatorname{Hom}(P[-s],E) 
            &\cong \operatorname{Hom}(\mathcal{H}^s (P[-s]), \mathcal{H}^s (E))
            \\&\cong \operatorname{Hom}(\mathcal{H}^0 (P), \mathcal{H}^s (E)) 
            \\&\not\cong 0.
        \end{aligned}
    \end{displaymath}
    Indeed, the morphisms provided above imply the composition $\mathcal{H}^0 (P) \to E_t \to \mathcal{H}^s (E)$ is nonzero. 
    However, $P[-s]\in \overline{\langle \mathcal{P} \rangle}^{(-\infty,0]}$, whereas $E\in (\overline{\langle \mathcal{P} \rangle}^{(-\infty,0]})^\perp$, which is absurd.
\end{proof}

\begin{lemma}
    \label{lem:lisse-etale_pullback_and_qc_pullback}
    Let $f\colon \mathcal{Y}\to \mathcal{X}$ be a concentrated smooth morphism of algebraic stacks. 
    Then the restriction of $\mathbf{L}f^\ast_{\textrm{lis-\'{e}t}}$ to $D_{\operatorname{qc}}(\mathcal{X})$ coincides with $\mathbf{L}f^\ast$. 
    Moreover, if $E\in D_{\operatorname{qc}}(\mathcal{X})$ has finitely presented cohomology in degree $n$, then $\mathbf{L}f^\ast E$ has finitely presented cohomology in degree $n$.
\end{lemma}

\begin{proof}
    The first claim is \cite[Lemma 3.5]{GuisadoVillalgordo/Lank:2026}.
    By \cite[\href{https://stacks.math.columbia.edu/tag/03DO}{Tag 03DO}]{StacksProject}, the second claim follows; e.g.\ use the first claim and that $\mathcal{H}^n (\mathbf{L}f^\ast E) \cong f^\ast \mathcal{H}^n(E)$ for all $n\in \mathbb{Z}$ via flatness.
\end{proof}

\begin{proposition}
    \label{prop:pseudo_approximation_along_etale_nbhd}
    Consider a flat Mayer--Vietoris square as in \eqref{eq:MV}. 
    If pseudoapproximation by compact complexes holds for $\mathcal{U}$ and $\mathcal{X}^\prime$, then it holds for $\mathcal{X}$. 
\end{proposition}

\begin{proof}
    We adapt the proof of \cite[Proposition 5.2]{Hall/Lamarche/Lank/Peng:2025}.
    If needed, we can replace $\mathcal{X}^\prime$ with $\mathcal{X}^\prime \amalg \mathcal{U}$. 
    This allows us to impose that $f$ is faithfully flat. Let $T\subseteq |\mathcal{X}|$ be a closed subset with quasi-compact complement $V$. Set $T^\prime := T\setminus |\mathcal{U}|$. Fix $E\in D^{\leq 0}_{\operatorname{qc},T}(\mathcal{X})$ such that $\mathcal{H}^0 (E)$ is finitely presented.

    Applying the hypothesis, \cite[Theorem A]{Hall/Rydh:2017} implies $\mathcal{U}$ and $\mathcal{X}^\prime$ satisfy the $1$-Thomason condition. 
    Then, by \cite[Proposition 3.1]{Lank:2026}, $\mathbf{L}j^\ast \colon D_{\operatorname{qc},T}(\mathcal{X}) \to D_{\operatorname{qc},|\mathcal{U}|\cap T}(\mathcal{U})$ is a Verdier localization. 
    Hence, $\mathbf{L}j^{\ast}E$ is supported on $T\cap|\mathcal{U}|$. 
    Furthermore, \Cref{lem:lisse-etale_pullback_and_qc_pullback} says $\mathcal{H}^0 (\mathbf{L}j^{\ast}E)$ is finitely presented.

    Since $\mathcal{U}$ satisfies pseudoapproximation by compacts, there exist $Q\in D_{\operatorname{qc},T \cap |\mathcal{U}|}(\mathcal{U})^c \cap D^{\leq 0}_{\operatorname{qc},T \cap |\mathcal{U}|}(\mathcal{U})$ on $\mathcal{U}$ and $Q\to \mathbf{L}j^{\ast}E$ such that $\mathcal{H}^0(Q)\to \mathcal{H}^0(\mathbf{L}j^{\ast}E)$ is surjective. 
    Once more, by \cite[Proposition 3.1]{Lank:2026}, there exists a Verdier localization sequence 
    \begin{displaymath}
        D_{\operatorname{qc},f^{-1}(T^\prime)}(\mathcal{X}^\prime)
        \to D_{\operatorname{qc},f^{-1}(T)}(\mathcal{X}^\prime)
        \xrightarrow{\mathbf{L}(j^\prime)^\ast} D_{\operatorname{qc},f^{-1}(T) \cap \mathcal{U}^\prime}(\mathcal{U}^\prime).
    \end{displaymath}
    Moreover, from \cite[Remark 3.2]{Lank:2026}, we know that
    \begin{displaymath}
        D_{\operatorname{qc},f^{-1}(T)}(\mathcal{X}^\prime)^c = D_{\operatorname{qc}}(\mathcal{X}^\prime)^c \cap D_{\operatorname{qc},f^{-1}(T)}(\mathcal{X}^\prime).
    \end{displaymath}

    Applying \cite[Theorem 2.1]{Neeman:1996} (which draws from \cite[\S2]{Neeman:1992b} and \cite[\S 5.1]{Thomason/Trobaugh:1990}), we can find a $P^\prime \in D_{\operatorname{qc},f^{-1}(T)}(\mathcal{X}^\prime)^c$ such that $\mathbf{L}(j^{\prime})^\ast P^\prime \cong \mathbf{L}(f^\prime)^\ast(Q\oplus Q[1])$. 
    As $Q\in D^{\leq 0}_{\operatorname{qc},T \cap |\mathcal{U}|}(\mathcal{U})$, we obtain $\tau^{\geq 1} \mathbf{L}(j^{\prime})^\ast P^\prime \cong 0$. 
    Consider the composition
    \begin{displaymath}
        \mathbf{L}(j^{\prime})^\ast P^\prime
        \cong \mathbf{L}(f^\prime)^\ast(Q\oplus Q[1])
        \to \mathbf{L}(f^\prime)^{\ast}Q
        \to \mathbf{L}(f^\prime)^{\ast}\mathbf{L}j^{\ast}E
        \cong\mathbf{L}(j^{\prime})^\ast \mathbf{L}f^{\ast}E.
    \end{displaymath}
    By \cite[Theorem 2.1.5]{Neeman:1996}, this composition in the Verdier quotient corresponds to a roof
    \begin{displaymath}
        \begin{tikzcd}
            & {P^{\prime \prime} } & \\
            {P^\prime} && {\mathbf{L}f^\ast E}
            \arrow["\lambda"', from=1-2, to=2-1]
            \arrow[from=1-2, to=2-3]
        \end{tikzcd}
    \end{displaymath}
    such that $P^{\prime \prime}\in D_{\operatorname{qc},f^{-1}(T)}(\mathcal{X}^\prime)^c$ and $\operatorname{cone}(\lambda) \in D_{\operatorname{qc},f^{-1}(T^\prime)}(\mathcal{X}^\prime)$.

    Appealing to \cite[Example 5.2 \& Lemma 5.9(3,4)]{Hall/Rydh:2017}, there exists $P \in D_{\operatorname{qc},T}(\mathcal{X})^c$ such that $\mathbf{L}f^\ast P \cong P^{\prime \prime}$ and $\mathbf{L}j^\ast P \cong Q\oplus Q[1]$. 
    Using \eqref{eq:fmv-LES}, we may find an exact sequence,
    \begin{displaymath}
        \begin{aligned}
            \operatorname{Hom} (P,E)
            & \to\operatorname{Hom} (\mathbf{L}j^{\ast}P, \mathbf{L}j^{\ast}E)\oplus\operatorname{Hom} (\mathbf{L}f^{\ast}P,\mathbf{L}f^{\ast}E)
            \\&\to\operatorname{Hom} (\mathbf{L}(f^\prime)^{\ast}\mathbf{L}j^{\ast}P,\mathbf{L}(f^\prime)^{\ast}\mathbf{L}j^{\ast}E).
        \end{aligned}
    \end{displaymath}
    Hence, the morphisms $Q \oplus Q[1] \to \mathbf{L} j^{\ast}E$ 
    and $P^{\prime \prime} \to \mathbf{L} f^\ast E$ 
    give rise to a morphism $P \to E$. 
    Consider the distinguished triangle
    \begin{equation}
        \label{eq:pseudo_approximation_along_etale_nbhd_extended}
        P \to E \to E^\prime \to P[1].
    \end{equation}
    By restricting to $\mathcal{U}$, we obtain a diagram
    \begin{displaymath}
        \begin{tikzcd}
            {Q[1]} & 0 & {Q[2]} & {Q[2]} \\
            {Q\oplus Q[1]} & {\mathbf{L}j^\ast E} & {\mathbf{L}j^\ast E^\prime} & {(Q\oplus Q[1])[1]} \\
            Q & {\mathbf{L}j^\ast E} & {E^{\prime \prime}} & {Q[1]}
            \arrow[from=1-1, to=1-2]
            \arrow[from=1-1, to=2-1]
            \arrow[from=1-2, to=1-3]
            \arrow[from=1-2, to=2-2]
            \arrow["{=}", from=1-3, to=1-4]
            \arrow[from=1-3, to=2-3]
            \arrow[from=1-4, to=2-4]
            \arrow[from=2-1, to=2-2]
            \arrow[from=2-1, to=3-1]
            \arrow[from=2-2, to=2-3]
            \arrow["{=}", from=2-2, to=3-2]
            \arrow[from=2-3, to=2-4]
            \arrow[from=2-3, to=3-3]
            \arrow[from=2-4, to=3-4]
            \arrow[from=3-1, to=3-2]
            \arrow[from=3-2, to=3-3]
            \arrow[from=3-3, to=3-4]
        \end{tikzcd}
    \end{displaymath}
    whose rows and columns are distinguished triangles. 
    As $Q,\mathbf{L}j^\ast E\in D^{\leq 0}_{\operatorname{qc}}(\mathcal{U})$ and $\mathcal{H}^0(Q)\to \mathcal{H}^0(\mathbf{L}j^\ast E)$ is surjective, we have $\tau^{\geq 0}E^{\prime \prime} \cong 0$. 
    Moreover, $\tau^{\geq 0}(Q[2])\cong 0$ because $Q[2]\in D^{\leq 0}_{\operatorname{qc}}(\mathcal{U})$. 
    Hence, $\tau^{\geq 0}\mathbf{L}j^\ast E^\prime \cong 0$, which implies that $\mathcal{H}^i(E^\prime)$ is supported on $T^\prime$ if $i\geq 0$. 
    
    There exists a long exact sequence of cohomology sheaves associated to \eqref{eq:pseudo_approximation_along_etale_nbhd_extended}. 
    This implies $E^\prime \in D^{\leq 0}_{\operatorname{qc}}(\mathcal{X})$. In particular, we have an exact sequence
    \begin{displaymath}
        \mathcal{H}^0 (P) \xrightarrow{f} \mathcal{H}^0 (E) \xrightarrow{g} \mathcal{H}^0 (E^\prime) \to 0.
    \end{displaymath}
    We want to show that $\mathcal{H}^0 (E^\prime)$ is finitely presented. 
    Note that $g$ is surjective. 
    Consider the short exact sequence
    \begin{displaymath}
        0 \to \ker g \to \mathcal{H}^0 (E) \xrightarrow{g} \mathcal{H}^0 (E^\prime) \to 0.
    \end{displaymath}
    Recall that $\mathcal{H}^0 (E)$ is finitely presented. 
    By \cite[\href{https://stacks.math.columbia.edu/tag/0H98}{Tag 0H98}]{StacksProject}, it suffices to show that $\ker g$ is of finite type. 
    Now, exactness says that $\ker g = \operatorname{im} f$. 
    Moreover, we have a short exact sequence
    \begin{displaymath}
        0 \to \ker f \to \mathcal{H}^0 (P) \to\operatorname{im} f \to 0.
    \end{displaymath}
    Since $P$ is pseudocoherent, $\mathcal{H}^0 (P)$ is finitely presented \cite[\href{https://stacks.math.columbia.edu/tag/08FX}{Tag 08FX}]{StacksProject}. 
    Hence, $\operatorname{im} f$ must be of finite type, and so $\mathcal{H}^0 (E^\prime)$ is finitely presented. 

    Now, we finish the proof. 
    As $f$ is concentrated and flat, \Cref{lem:lisse-etale_pullback_and_qc_pullback} says $\mathbf{L}f^{\ast}E^\prime \in D_{\operatorname{qc}}^{\leq 0}(\mathcal{X}^\prime)$ and $\mathcal{H}^0 (\mathbf{L}f^{\ast}E^\prime)$ is finitely presented. 
    Moreover, $\mathcal{X}^\prime$ satisfies pseudoapproximation by compacts, and so there exists a distinguished triangle
    \begin{displaymath}
        F^\prime \to \mathbf{L}f^{\ast}E^\prime \to H^\prime \to F[1]
    \end{displaymath}
    where $F^\prime \in D_{\operatorname{qc},T^\prime}(\mathcal{X}^\prime)^c \cap D^{\leq 0}_{\operatorname{qc}}(\mathcal{X}^\prime)$ and $\tau^{\geq 0}H^\prime \cong 0$. 
    Recall that \eqref{eq:MV} is a flat Mayer--Vietoris square. 
    Then \cite[Proposition 4.2]{Hall/Rydh:2023} implies there exists an induced morphism $\mathbf{R}f_\ast F^\prime \to E^\prime$ such that the pullback is the composition $\mathbf{L}f^\ast \mathbf{R}f_\ast F^\prime \cong F^\prime \to \mathbf{L}f^\ast E^\prime$. 
    Moreover, by loc.\ cit., we obtain that $F=\mathbf{R}f_{\ast}F^\prime \in D_{\operatorname{qc},T^\prime}(\mathcal{X})^c$. 
    Consider the distinguished triangle
    \begin{displaymath}
        F \to E^\prime \to H \to F[1].
    \end{displaymath}
    It follows that $\mathbf{L}f^\ast H \cong H^\prime$, and so, the flatness of $f$ implies that $0 \cong \tau^{\geq 0}\mathbf{L}f^\ast H \cong \mathbf{L}f^\ast (\tau^{\geq 0}H)$. 
    Since $f$ is faithfully flat, $\tau^{\geq 0} H \cong 0$. 
    Consequently, we obtain a diagram with distinguished rows and columns,
    \begin{displaymath}
        \begin{tikzcd}
            P & {\widetilde{P}} & F & {P[1]} \\
            P & E & {E^\prime} & {P[1]} \\
            0 & H & H & {0.}
            \arrow[from=1-1, to=1-2]
            \arrow["{=}"', from=1-1, to=2-1]
            \arrow[from=1-2, to=1-3]
            \arrow[from=1-2, to=2-2]
            \arrow[from=1-3, to=1-4]
            \arrow[from=1-3, to=2-3]
            \arrow["{=}", from=1-4, to=2-4]
            \arrow[from=2-1, to=2-2]
            \arrow[from=2-1, to=3-1]
            \arrow[from=2-2, to=2-3]
            \arrow[from=2-2, to=3-2]
            \arrow[from=2-3, to=2-4]
            \arrow[from=2-3, to=3-3]
            \arrow[from=2-4, to=3-4]
            \arrow[from=3-1, to=3-2]
            \arrow["{=}"', from=3-2, to=3-3]
            \arrow[from=3-3, to=3-4]
        \end{tikzcd}
    \end{displaymath}
    Thus, $\tilde{P}\in D_{\operatorname{qc},T}(\mathcal{X})^c$ and $\tau^{\geq 0}H \cong 0$, which implies $\tilde{P} \to E$ gives the desired morphism.
\end{proof}

\begin{lemma}
    \label{lem:support_along_finite}
    Let $f\colon \mathcal{Y}\to \mathcal{X}$ be a concentrated morphism of quasi-compact quasi-separated algebraic stacks. 
    If $Z\subseteq |\mathcal{X}|$ is closed with quasi-compact complement, then $\mathbf{R}f_\ast D_{\operatorname{qc},f^{-1}(Z)}(\mathcal{X}) \subseteq D_{\operatorname{qc},Z}(\mathcal{X})$.
\end{lemma}

\begin{proof}
    Denote by $j\colon \mathcal{U}\to \mathcal{X}$ the open immersion associated to $Z$. Consider the fibered square
    \begin{displaymath}
        \begin{tikzcd}
            {\mathcal{Y}\times_{\mathcal{X}} \mathcal{U}} & {\mathcal{U}} \\
            {\mathcal{Y}} & {\mathcal{X}.}
            \arrow["{f^\prime}", from=1-1, to=1-2]
            \arrow["{j^\prime}"', from=1-1, to=2-1]
            \arrow["j", from=1-2, to=2-2]
            \arrow["f"', from=2-1, to=2-2]
        \end{tikzcd}
    \end{displaymath}
    Choose $E\in D_{\operatorname{qc},f^{-1}(Z)}(\mathcal{X})$. Then $\mathbf{L}(j^\prime)^\ast E \cong 0$, and so, $\mathbf{R}f^\prime \mathbf{L}(j^\prime)^\ast E \cong 0$. By flat base change, $\mathbf{L}j^\ast \mathbf{R}f_\ast E \cong 0$. Consequently, $\mathbf{R}f_\ast E \in D_{\operatorname{qc},Z}(\mathcal{X})$, which completes the proof.
\end{proof}

\begin{proposition}
    \label{prop:finite_cover_pseudoapproximation}
    Let $\mathcal{X}$ be a concentrated algebraic stack. 
    Suppose there exists a finite faithfully flat morphism $f\colon U \to \mathcal{X}$ of finite presentation from a quasi-affine scheme.
    If the unit of derived pullback/pushforward adjunction $\mathcal{O}_\mathcal{X} \to \mathbf{R}f_\ast \mathcal{O}_U$ splits, then $\mathcal{X}$ satisfies pseudoapproximation by compacts.
\end{proposition}

\begin{proof}
    Let $Z\subseteq |\mathcal{X}|$ be a closed subset with quasi-compact complement. 
    Choose $E\in D^{\leq 0}_{\operatorname{qc},Z}(\mathcal{X})$ such that $\mathcal{H}^0 (E)$ is finitely presented. 
    If the unit $\mathcal{O}_\mathcal{X} \to \mathbf{R}f_\ast \mathcal{O}_U$ splits, then tensoring with $E$ and applying projection formula yields a splitting $E \to \mathbf{R}f_\ast \mathbf{L}f^\ast E$. 
    By \cite[Lemma 5.15]{DeDeyn/Lank/ManaliRahul:2024b}, the unit $E \to \mathbf{R}f_\ast \mathbf{L}f^\ast E$ of the derived pullback/pushforward adjunction is a split monomorphism. 
    Hence, there exists $\mathbf{R}f_\ast \mathbf{L}f^\ast E \to E$ such that $\mathcal{H}^0 (\mathbf{R}f_\ast \mathbf{L}f^\ast E) \to \mathcal{H}^0 (E)$ is surjective. Since $f$ is finite and flat, $\mathbf{L}f^\ast$ and $\mathbf{R}f_\ast$ are $t$-exact functors with respect to the standard $t$-structures. 
    Moreover, by \Cref{lem:lisse-etale_pullback_and_qc_pullback}, $\mathcal{H}^0 (\mathbf{L}f^\ast E)$ is finitely presented. 
    Also, as $V$ is quasi-affine, it satisfies pseudoapproximation by compacts (see \Cref{ex:quasi-affine_pseudoapprox}). 
    This gives us a morphism $P\to \mathbf{L}f^\ast E$ from some $P\in D_{\operatorname{qc},f^{-1}(Z)}^{\leq 0} (V) \cap D_{\operatorname{qc}}(V)^c$ such that $\mathcal{H}^0 (P)\to \mathcal{H}^0 (\mathbf{L}f^\ast E)$ is surjective. 
    However, $\mathbf{R}f_\ast$ being $t$-exact implies $\mathcal{H}^0 (\mathbf{R}f_\ast P)\to \mathcal{H}^0 (\mathbf{R}f_\ast\mathbf{L}f^\ast E)$ is surjective. 
    Furthermore, $f$ is finite and flat, and so \Cref{lem:support_along_finite} says $\mathbf{R}f_\ast P \in D_{\operatorname{qc},T}(\mathcal{X})^c$. 
    Thus, the morphism $\mathbf{R}f_\ast P \to \mathbf{R}f_\ast \mathbf{L}f^\ast E \to E$ satisfies $\mathcal{H}^0 (\mathbf{R}f_\ast P) \to \mathcal{H}^0 (E)$ being surjective with $\mathbf{R}f_\ast P \in D^{\leq 0}_{\operatorname{qc},T}(\mathcal{X}) \cap D_{\operatorname{qc}}(\mathcal{X})^c$.
\end{proof}

\begin{proof}
    [Proof of \Cref{prop:psuedoapprox}]
    By \Cref{lem:approx_by_comorphismcts_implies_Thomason}, we only have to show that $\mathcal{X}$ satisfies pseudoapproximation by compact complexes.
    
    First, we prove the case $\mathcal{X}$ has quasi-finite and separated diagonal. 
    Define $\mathbb{E}$ to be the strictly full $2$-subcategory of algebraic stacks over $\mathcal{S}$ consisting of algebraic stacks whose structure morphism $\mathcal{X} \to \mathcal{S}$ is representable by algebraic spaces, separated, finitely presented, quasi-finite, and flat. 
    Observe the following facts concerning $\mathbb{E}$:
    \begin{itemize}
        \item The source of every object in $\mathbb{E}$ is quasi-compact quasi-separated as any finitely presented morphism of stacks is quasi-compact quasi-separated by definition.
        \item Every morphism in $\mathbb{E}$ is representable by algebraic spaces (see e.g.\ \cite[Lemma 6.7]{DeDeyn/Lank/ManaliRahul:2025}), and so each morphism in $\mathbb{E}$ is concentrated by \cite[Lemma 2.5(3)]{Hall/Rydh:2017}. 
        In particular, as $\mathcal{S}$ is concentrated, every source of an object in $\mathbb{E}$ is concentrated.
    \end{itemize}
    
    Set $\mathbb{D}$ to be the strictly full $2$-subcategory of $\mathbb{E}$ consisting of objects $(\mathcal{Y}\to \mathcal{X})$ such that for every $(\mathcal{U}\to \mathcal{Y})\in \mathbb{E}$ that is \'{e}tale, one has $\mathcal{U}$ satisfying pseudoapproximation by compacts. 
    We invoke \cite[Theorem E]{Hall/Rydh:2018}\footnote{There is a typo in loc.\ cit.\ known to experts, but we reminder the reader: it suffices to only check (I2) for morphisms that are additionally \textit{flat}.} to show $\mathbb{E}=\mathbb{D}$. 
    To this end, we need to verify the following:
    \begin{enumerate}
        \item \label{item:preferred_cover1} if $(\mathcal{U} \to \mathcal{X})\in\mathbb{E}$ is an open immersion and $\mathcal{X}\in\mathbb{D}$, then $\mathcal{U}\in\mathbb{D}$,
        \item \label{item:preferred_cover2} if $(V \to \mathcal{X})\in \mathbb{E}$ is finite, flat and surjective with affine source, then $\mathcal{X}\in\mathbb{D}$, and
        \item \label{item:preferred_cover3} if $(\mathcal{U} \xrightarrow{i} \mathcal{X})$, $(\mathcal{Y} \xrightarrow{f} \mathcal{X})\in\mathbb{E}$, where $i$ is an open immersion and $f$ is \'{e}tale which form an \'{e}tale neighborhood, then $\mathcal{X}\in \mathbb{D}$ whenever $\mathcal{U}$, $\mathcal{Y}\in\mathbb{D}$.
    \end{enumerate}
    We address these below:
    \begin{itemize}
        \item By construction of $\mathbb{D}$, \eqref{item:preferred_cover1} is trivial. 
        \item Let $(f\colon V \to \mathcal{X})\in \mathbb{E}$ is finite, flat and surjective with affine source. 
        Choose $(g\colon \mathcal{U}\to \mathcal{Y})\in \mathbb{E}$ that is \'{e}tale. 
        Denote by $g^\prime \colon V\times_{\mathcal{X}}\mathcal{Y}\to V$ and $f^\prime \colon V\times_{\mathcal{X}}\mathcal{Y} \to \mathcal{Y}$ the natural projections. 
        By \cite[Proposition 3.1]{Olsson/Starr:2003}, $V\times_{\mathcal{X}}\mathcal{Y}$ is quasi-affine because $g$ is representable by algebraic spaces, separated, finitely presented, and \'{e}tale. 
        Moreover, $f^\prime$ is finite faithfully flat and of finite presentation with quasi-affine scheme source. 
        By \cite[Lemma 6.2]{DeDeyn/Lank/ManaliRahul/Peng:2025}, the unit $\mathcal{O}_{\mathcal{X}} \to \mathbf{R}f_\ast \mathcal{O}_V$ of the derived pullback/pushforward adjunction is a split monomorphism. 
        Then flat base change yields a splitting $\mathcal{O}_{V\times_{\mathcal{X}}\mathcal{Y}} \to \mathbf{R}f^\prime_\ast \mathcal{O}_{\mathcal{Y}}$. 
        By \cite[Lemma 5.15]{DeDeyn/Lank/ManaliRahul:2024b}, the unit $\mathcal{O}_{V\times_{\mathcal{X}}\mathcal{Y}} \to \mathbf{R}f^\prime_\ast \mathcal{O}_{\mathcal{Y}}$ of the derived pullback/pushforward adjunction is a split monomorphism. 
        Thus, by \Cref{prop:finite_cover_pseudoapproximation}, \eqref{item:preferred_cover2} holds.
        \item The base change of a flat Mayer--Vietoris square remains such after arbitrary base change on $\mathcal{X}$. 
        See \cite[Lemma 3.1]{Hall/Rydh:2023}. 
        Hence, \Cref{prop:pseudo_approximation_along_etale_nbhd} ensures that $\mathcal{X}$ satisfies pseudoapproximation by compacts.
    \end{itemize}
    This completes the proof of the first case.

    Lastly, the second case. By \cite[Theorem C]{Hall/Rydh:2015}, any Deligne--Mumford $\mathbb{Q}$-stack is concentrated. 
    Hence, all morphisms between Deligne--Mumford $\mathbb{Q}$-stacks is concentrated. 
    Thus, the same proof above goes through with \cite[Theorem E]{Hall/Rydh:2018} in the case of Deligne--Mumford $\mathbb{Q}$-stacks.
\end{proof}

\section{Classification}
\label{sec:classification}

\begin{definition}
    \label{def:graded_filtration}
    Let $\mathcal{X}$ be a quasi-compact quasi-separated algebraic stack. 
    We say a Thomason filtration $\phi$ on $\mathcal{X}$ is \textbf{supported by perfects} if there is a $\mathcal{S}\subseteq \operatorname{Perf}(\mathcal{X})$ such that for all $n\in \mathbb{Z}$,
    \begin{displaymath}
        \phi(n)=\bigcup_{\substack{j\geq n\\ S\in \mathcal{S}}} \operatorname{supp}(\mathcal{H}^j (S)).
    \end{displaymath}
\end{definition}

\begin{convention}
    For convenience, set $\operatorname{Perf}^{\leq 0}(\mathcal{X}) := \operatorname{Perf}(\mathcal{X}) \cap D^{\leq 0}_{\operatorname{qc}}(\mathcal{X})$ and we let $\operatorname{Perf}^{\leq 0}$ play the role of the fixed preaisle $\mathcal{P}^{\leq 0}$ of \Cref{sec:prelim_t-structures_tensor}. In particular, this means that an aisle $\mathcal{T}^{\leq 0}$ in $D_{\operatorname{qc}}(\mathcal{X})$ is tensor if it is closed under the tensor action of $\operatorname{Perf}^{\leq 0}(\mathcal{X})$.
\end{convention}

\begin{lemma}
    \label{lem:affine_supported_by_perfects}
    Let $\mathcal{X}$ be a concentrated algebraic stack such that pseudoapproximation by compact complexes holds for $\mathcal{X}$. Then:
    \begin{enumerate}
        \item For any closed subset $Z$ of $|\mathcal{X}|$ with quasi-compact complement there is $S_Z \in \operatorname{Perf}^{\leq 0}(\mathcal{X})$ such that $\operatorname{supp}(S_Z) \subseteq Z$ and $\operatorname{supp}(\mathcal{H}^0(S_Z)) = Z$.
        \item Every Thomason filtration on $\mathcal{X}$ is supported by perfects.
    \end{enumerate}
     
\end{lemma}

\begin{proof}
    We prove the first claim. By \cite[Proposition 8.2]{Rydh:2016}, there is a closed immersion $i\colon \mathcal{Z} \to \mathcal{X}$ which identifies $|\mathcal{Z}|$ with $Z$. It follows that $E := i^\ast \mathcal{O}_\mathcal{Z}$ is a quasi-coherent sheaf over $\mathcal{X}$ of finite presentation whose support is precisely $Z$. Applying the pseudoapproximation by compact complexes to the pair $(Z,E)$ we obtain a compact object $P \in \operatorname{Perf}^{\leq 0}(\mathcal{X})$ supported inside $Z$ together with a morphism $P \to E$ inducing an epimorphism $\mathcal{H}^0(P) \to \mathcal{H}^0(E)$. It follows that 
    \begin{displaymath}
        Z = \operatorname{supp}(\mathcal{H}^0(E)) \subseteq \operatorname{supp}(\mathcal{H}^0(P)) \subseteq \operatorname{supp}(E) = Z.
    \end{displaymath}
    Thus, letting $S_Z := P$ verifies the claim.
        
    Next, we prove the second claim. For each $n \in \mathbb{Z}$, let us write the Thomason set $\phi(n)$ as a union $\cup_{i \in I_n}Z_i^n$ of closed subsets with quasi-compact complements. Using the first claim, let 
    \begin{displaymath}
        \mathcal{S} := \{S_{Z_i^n}[n] \mid n \in \mathbb{Z}, i \in I_n\}.
    \end{displaymath}
    It can be verified that $\mathcal{S}$ witnesses that $\phi$ is supported by perfects.
\end{proof}

\begin{lemma}
    \label{lem:cohomology_vanish_from_aisle}
    Let $\mathcal{X}$ be a quasi-compact quasi-separated algebraic stack. 
    Suppose $E\in D_{\operatorname{qc}}(\mathcal{X})$ and $\mathcal{S}\subseteq  D_{\operatorname{qc}}(\mathcal{X})$. 
    If $E$ is an $n$-fold extension of objects from $\mathcal{S}$, then for any $j\in \mathbb{Z}$ one has 
    \begin{displaymath}
       \operatorname{supp}(\mathcal{H}^j (E))\subseteq \bigcup_{S\in  \mathcal{S}} \operatorname{supp}(\mathcal{H}^j (S)).
    \end{displaymath}
\end{lemma}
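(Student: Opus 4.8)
The statement to prove, Lemma~\ref{lem:cohomology_vanish_from_aisle}, is an elementary consequence of the long exact sequence in cohomology together with the behaviour of supports under extensions, so the plan is to induct on $n$.

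\textbf{Base case.} For $n=1$ an object $E$ is (by convention) isomorphic to an element of $\mathcal{S}$, so the inclusion is an equality of the trivial kind and there is nothing to prove; alternatively one may take $n=0$ to mean $E=0$, whose cohomology vanishes.

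\textbf{Inductive step.} Suppose the claim holds for $(n-1)$-fold extensions. If $E$ is an $n$-fold extension of objects of $\mathcal{S}$, then by definition there is a distinguished triangle
\begin{displaymath}
    E' \to E \to S \to E'[1]
\end{displaymath}
with $S\in\mathcal{S}$ and $E'$ an $(n-1)$-fold extension of objects of $\mathcal{S}$ (or the other way around, depending on the convention for building extensions; the argument is symmetric). Applying the cohomological functor $\mathcal{H}^j$ yields an exact sequence
\begin{displaymath}
    \mathcal{H}^j(E') \to \mathcal{H}^j(E) \to \mathcal{H}^j(S)
\end{displaymath}
of quasi-coherent sheaves on $\mathcal{X}$. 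Here I would invoke the standard fact that for a three-term exact sequence $A\to B\to C$ of quasi-coherent sheaves one has $\operatorname{supp}(B)\subseteq\operatorname{supp}(A)\cup\operatorname{supp}(C)$; this is checked on a smooth cover $p\colon U\to\mathcal{X}$, where $p^\ast$ is exact and the analogous statement for modules over a ring is classical (a point of $B$ that dies in $C$ comes from $A$, a point not in the image of $A$ injects into $C$). Combining this with the inductive hypothesis applied to $E'$ gives
\begin{displaymath}
    \operatorname{supp}(\mathcal{H}^j(E)) \subseteq \operatorname{supp}(\mathcal{H}^j(E')) \cup \operatorname{supp}(\mathcal{H}^j(S)) \subseteq \bigcup_{S'\in\mathcal{S}}\operatorname{supp}(\mathcal{H}^j(S')) \cup \operatorname{supp}(\mathcal{H}^j(S)) = \bigcup_{S'\in\mathcal{S}}\operatorname{supp}(\mathcal{H}^j(S')),
\end{displaymath}
which closes the induction.

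\textbf{Main obstacle.} There is no serious obstacle; the only point requiring a word of care is the behaviour of supports in a three-term exact sequence of quasi-coherent sheaves on a stack, which reduces to the affine case via a smooth surjection and the definition $\operatorname{supp}(M)=p(\operatorname{supp}(p^\ast M))$ recorded in \S\ref{sec:prelim_stacks}. One should also be slightly careful about the convention defining ``$n$-fold extension'' so that the triangle used in the inductive step is the correct one, but either convention works by the symmetry of the support inequality.
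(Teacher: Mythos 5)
Your proof is correct and follows essentially the same route as the paper: induction on $n$ using the distinguished triangle defining the extension and the long exact sequence in cohomology, with the support containment checked after pulling back along a smooth cover (the paper phrases this as reducing to a scheme first and then arguing on stalks, which is the same localization argument you sketch). No gaps worth noting.
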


\begin{proof}
    To start, we make a reduction. 
    Let $U \to \mathcal{X}$ be a smooth surjection from a scheme. 
    Recall that the derived pullback is $t$-exact with respect to the standard $t$-structure and takes distinguished triangles to distinguished triangles. 
    So, we can replace $\mathcal{X}$ by a scheme; that is, reducing to the case of a scheme $X$. 

    We prove the desired claim by induction on $n$. 
    The base case with $n=1$ is straightforward. 
    Indeed, $E$ would be isomorphic to an object in $\mathcal{S}$. 
    Now suppose we have proven the claim for any object that is a $k$-fold extension of objects from $\mathcal{S}$ where $1\leq k \leq  n-1$. 
    
    Let $E$ be an $n$-fold extension of objects from $\mathcal{S}$. 
    This means there is a distinguished triangle
    \begin{displaymath}
        E^\prime \to E \to E^{\prime \prime} \to E^{\prime}[1]
    \end{displaymath}
    where $E^\prime$ is an $(n-1)$-fold extension of objects from $\mathcal{S}$ and $E^{\prime \prime}$ is isomorphic to an object from $\mathcal{S}$. 

    Consider the long exact sequence in cohomology 
    \begin{displaymath}
        \cdots \to \mathcal{H}^j (E^\prime) \to \mathcal{H}^j (E) \to \mathcal{H}^j (E^{\prime \prime}) \to \cdots.
    \end{displaymath}
    Let $p \in X\setminus (\cup_{S\in  \mathcal{S}} \operatorname{supp}(\mathcal{H}^j (S)))$. 
    Passing to the stalk of $X$ at $p$, we see from the long exact sequence above that $(\mathcal{H}^j (E))_p =0$. 
    Hence, $p\not \in \operatorname{supp}(\mathcal{H}^j (E))$ as desired. 
    To see, use the inductive hypothesis that $(\mathcal{H}^j (E^\prime))_p=0$. 
    This completes the proof.
\end{proof}

\begin{lemma}
    \label{lem:thomason_description}
    Let $X=\operatorname{Spec}(R)$ for a commutative ring (which could be not Noetherian). Suppose $\mathcal{P}\subseteq \operatorname{Perf}(X)$. 
    Under the bijection of \cite[Theorem 5.1]{Hrbek:2020}, the corresponding Thomason filtration of $\overline{\langle \mathcal{P}\rangle}^{(-\infty,0]}$ can be also identified as Thomason filtration on $X$ given by the rule
    \begin{displaymath}
        n \mapsto \bigcup_{\substack{j\geq n \\ P \in \mathcal{P}}} \operatorname{supp}(\mathcal{H}^j (P)).
    \end{displaymath}
\end{lemma}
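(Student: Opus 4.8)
The plan is to reduce this to Lemma 2.11 (affine\_supported\_by\_perfects) plus Lemma 2.12 (cohomology\_vanish\_from\_aisle), and the structural description of the aisle from Lemma 2.9 (the corrected Dubey--Sahoo statement). First I would observe that since $X = \operatorname{Spec}(R)$ is affine, $D_{\operatorname{qc}}(X)$ is rigidly compactly generated, the standard aisle $D^{\leq 0}_{\operatorname{qc}}(X)$ is compactly generated by a single object, and $1 = \mathcal{O}_X \in \mathcal{T}^c \cap \mathcal{T}^{\leq 0}$; hence Lemma 2.9 applies with $\mathcal{S} = \mathcal{P} \subseteq \operatorname{Perf}(X) = \mathcal{T}^c$. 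In particular $\operatorname{preaisle}_\otimes(\mathcal{P}) = \operatorname{aisle}_\otimes(\mathcal{P})$ is an aisle; but more importantly, since every aisle on $D_{\operatorname{qc}}(X)$ for affine $X$ is automatically a $\otimes$-aisle (by \cite[Proposition 3.8]{Dubey/Sahoo:2023}, recalled in the text), we have $\operatorname{aisle}(\mathcal{P}) = \operatorname{aisle}_\otimes(\mathcal{P})$. So I may freely use the structural description in Lemma 2.9(2): every $A \in \operatorname{aisle}(\mathcal{P})$ sits in a triangle built from $A_i$, each of which is an $i$-fold extension of small coproducts of nonnegative shifts of objects in $\mathcal{K} \otimes \mathcal{P}$, where $\mathcal{K} = \operatorname{Perf}(X) \cap D^{\leq 0}_{\operatorname{qc}}(X)$.

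Next I would name the filtration $\psi(n) := \bigcup_{j \geq n,\, P \in \mathcal{P}} \operatorname{supp}(\mathcal{H}^j(P))$ and let $\phi$ be the Thomason filtration corresponding to $\operatorname{aisle}(\mathcal{P})$ under Hrbek's bijection \cite[Theorem 5.1]{Hrbek:2020}. By \Cref{lem:affine_supported_by_perfects} we already know $\phi$ is supported by perfects via \emph{some} $\mathcal{S} \subseteq \operatorname{Perf}(X)$ — concretely the Koszul/structure-sheaf complexes $K(\mathcal{I})[s]$ lying in $\mathcal{U} = \operatorname{aisle}(\mathcal{P})$. The goal is to show the specific generating set $\mathcal{P}$ computes the same filtration, i.e. $\phi = \psi$. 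I would prove the two inclusions separately.

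For $\psi(n) \subseteq \phi(n)$: recall from \cite[Theorem 5.1]{Hrbek:2020} that $\phi(n)$ is characterized intrinsically as $\operatorname{supp}$ of the cohomology of objects of $\mathcal{U}$ in degrees $\geq n$ — more precisely, $Z \subseteq \phi(n)$ iff the (extension-closed, coproduct-closed) structure forces it; concretely, $p \notin \phi(n)$ iff $\mathcal{H}^j(E)_p = 0$ for all $E \in \mathcal{U}$ and all $j \geq n$ (this is the content of how Hrbek's filtration is read off, and matches the affine computation in \Cref{lem:affine_supported_by_perfects}). Since each $P \in \mathcal{P}$ lies in $\operatorname{aisle}(\mathcal{P}) = \mathcal{U}$, for any $j \geq n$ we get $\operatorname{supp}(\mathcal{H}^j(P)) \subseteq \phi(n)$, hence $\psi(n) \subseteq \phi(n)$. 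For the reverse inclusion $\phi(n) \subseteq \psi(n)$: here is where \Cref{lem:cohomology_vanish_from_aisle} and \Cref{lem:dubey_sahoo_corrected}(2) do the work. Take $E \in \mathcal{U}$ and $j \geq n$; I want $\operatorname{supp}(\mathcal{H}^j(E)) \subseteq \psi(n)$. By Lemma 2.9(2), $E$ is assembled (via a homotopy-colimit-type triangle) from the $A_i$, and taking cohomology, $\operatorname{supp}(\mathcal{H}^j(E)) \subseteq \bigcup_i \operatorname{supp}(\mathcal{H}^j(A_i)) \cup \bigcup_i \operatorname{supp}(\mathcal{H}^{j-1}(A_i))$ — wait, I must be careful: the triangle $\bigsqcup A_i \to A \to (\bigsqcup A_i)[1]$ gives $\mathcal{H}^j(A)$ sandwiched between $\mathcal{H}^j(\bigsqcup A_i)$ and $\mathcal{H}^{j}((\bigsqcup A_i)[1]) = \mathcal{H}^{j+1}(\bigsqcup A_i)$; since cohomology commutes with coproducts over $\operatorname{Spec}(R)$, it suffices to bound $\operatorname{supp}(\mathcal{H}^k(A_i))$ for $k \in \{j, j+1\} \subseteq \{n, n+1, \dots\}$, still $\geq n$. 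Each $A_i$ is an $i$-fold extension of coproducts of nonnegative shifts $(\mathcal{K} \otimes \mathcal{P})[t]$, $t \geq 0$; by \Cref{lem:cohomology_vanish_from_aisle} and commutation of cohomology with coproducts, $\operatorname{supp}(\mathcal{H}^k(A_i))$ is contained in the union of $\operatorname{supp}(\mathcal{H}^k((Q \otimes P)[t]))$ over $Q \in \mathcal{K}$, $P \in \mathcal{P}$, $t \geq 0$, i.e. over $\operatorname{supp}(\mathcal{H}^{k+t}(Q \otimes P))$. Since $Q \in D^{\leq 0}$ and everything is over $\operatorname{Spec}(R)$ with $Q, P$ perfect, $\operatorname{supp}(\mathcal{H}^{k+t}(Q \otimes P)) \subseteq \operatorname{supp}(Q) \cap \bigcup_{j' \geq k+t} \operatorname{supp}(\mathcal{H}^{j'}(P))$; the key point is that tensoring with the connective perfect complex $Q$ cannot \emph{raise} the top nonzero cohomological degree of $P$, so $\mathcal{H}^{k+t}(Q \otimes P) \neq 0$ forces $k + t \leq$ (top degree of $P$), and combined with $k + t \geq k \geq n$ this yields $\operatorname{supp}(\mathcal{H}^{k+t}(Q\otimes P)) \subseteq \bigcup_{j' \geq n} \operatorname{supp}(\mathcal{H}^{j'}(P)) \subseteq \psi(n)$. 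Tracking through: $\operatorname{supp}(\mathcal{H}^j(E)) \subseteq \psi(n)$ for all $E \in \mathcal{U}$, $j \geq n$; since $\phi(n)$ is the union of such supports, $\phi(n) \subseteq \psi(n)$.

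The main obstacle I anticipate is the bookkeeping in the reverse inclusion — specifically pinning down the precise shift indices in the telescoping triangle of \Cref{lem:dubey_sahoo_corrected}(2) and making the degree inequality $k + t \leq \max\{j' : \mathcal{H}^{j'}(P) \neq 0\}$ airtight, since a priori $\mathcal{K} \otimes \mathcal{P}$ sits in degrees $\leq \max$-degree-of-$\mathcal{P}$ but one must confirm the nonnegative shifts and iterated extensions don't leak cohomology into degrees that escape $\psi(n)$. A cleaner route, which I would actually write, avoids Lemma 2.9(2) entirely: observe directly that $\operatorname{aisle}(\mathcal{P}) = \operatorname{aisle}(\mathcal{K} \otimes \mathcal{P})$ and that $\mathcal{K} \otimes \mathcal{P} \subseteq \operatorname{Perf}(X) \cap \operatorname{aisle}(\mathcal{P})$, then invoke that for a compactly generated aisle the associated Thomason filtration depends only on the cohomology supports of a compact generating set in the standard-bounded-above range — which is exactly the computation already carried out in the proof of \Cref{lem:affine_supported_by_perfects} — applied now to the generating set $\mathcal{P}$ itself (using that $\operatorname{aisle}(\mathcal{P})$ equals the aisle of the Koszul complexes it contains, and that the two generating sets have comparable cohomology supports in every degree $\geq n$). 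Either way, the conceptual input is entirely \cite{Hrbek:2020}; no new classification is needed.
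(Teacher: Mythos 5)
Your route is essentially correct, but it is genuinely different from the paper's. The paper never touches \Cref{lem:dubey_sahoo_corrected}: both inclusions are proved by applying Hrbek's Lemma~5.1 (dévissage up to direct summands) --- once to the structure sheaves $(i_{\mathcal{I}})_\ast\mathcal{O}_{Z_{\mathcal{I}}}[-n]\in\mathcal{U}$, written as summands of finite extensions of nonnegative shifts of objects of $\mathcal{P}$, and once to each $P\in\mathcal{P}$, written as a summand of finite extensions of shifted structure sheaves from Hrbek's generating set --- and then feeding both through \Cref{lem:cohomology_vanish_from_aisle} with a small degree computation ($j=k-s$ forces $(i_{\mathcal{I}})_\ast\mathcal{O}_{Z_{\mathcal{I}}}[-n]\in\mathcal{U}$). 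You instead (i) read off one inclusion from an ``intrinsic'' description of Hrbek's filtration, namely $\phi(n)=\bigcup_{E\in\mathcal{U},\,j\geq n}\operatorname{supp}(\mathcal{H}^j(E))$, and (ii) get the other from the telescope description of \Cref{lem:dubey_sahoo_corrected}(2) together with the connectivity bound $\operatorname{supp}(\mathcal{H}^m(Q\otimes^{\mathbf{L}}P))\subseteq\bigcup_{j'\geq m}\operatorname{supp}(\mathcal{H}^{j'}(P))$ for connective $Q$, which is correct (localize and use that the derived tensor of an object of $D^{\leq 0}$ with an object of $D^{\leq m-1}$ stays in $D^{\leq m-1}$). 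Your approach proves slightly more (a support bound for \emph{every} object of the aisle, not just the generators), at the cost of invoking the tensor machinery that the paper deliberately reserves for the stacky theorem.

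Two points you should tighten before this is a proof. First, the nontrivial half of your intrinsic characterization --- that every $E\in\mathcal{U}$ satisfies $\operatorname{supp}(\mathcal{H}^j(E))\subseteq\phi(j)$ --- is asserted as ``the content of how Hrbek's filtration is read off,'' but it is not the formula the paper quotes from \cite[Theorem 5.1]{Hrbek:2020}; it needs an argument. It is true and quick: the subcategory $\{E:\operatorname{supp}(\mathcal{H}^j(E))\subseteq\phi(j)\ \forall j\}$ is closed under coproducts, extensions and positive shifts and contains Hrbek's Koszul/structure-sheaf generators of $\mathcal{U}$ (their cohomology sits on $Z_{\mathcal{I}}\subseteq\phi$ in the relevant degrees), hence contains $\operatorname{aisle}$ of those generators, which is $\mathcal{U}$ by Hrbek's theorem --- alternatively, this half is exactly what the paper's summand-dévissage argument delivers. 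The other half of your characterization, $\phi(n)\subseteq\bigcup_{E\in\mathcal{U},\,j\geq n}\operatorname{supp}(\mathcal{H}^j(E))$, is immediate from the quoted formula since each $Z_{\mathcal{I}}$ is the support of $\mathcal{H}^n$ of an object of $\mathcal{U}$. Second, your identification $\operatorname{aisle}(\mathcal{P})=\operatorname{aisle}_\otimes(\mathcal{P})$ is cited to \cite[Proposition 3.8]{Dubey/Sahoo:2023}, which is stated for \emph{Noetherian} affine schemes; since the lemma allows arbitrary commutative rings you should instead argue directly (for fixed $U\in\operatorname{aisle}(\mathcal{P})$ the class $\{E:E\otimes^{\mathbf{L}}U\in\operatorname{aisle}(\mathcal{P})\}$ is a cocomplete preaisle containing $\mathcal{O}_X$, hence contains $D^{\leq 0}_{\operatorname{qc}}(X)$), which is the point the paper gestures at via \Cref{lem:dubey_sahoo_corrected} and \cite[\S 2.3]{Canonaco/Neeman/Stellari:2025}. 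Your closing ``cleaner route'' paragraph is too vague to count as a proof, but the main argument, with the two repairs above, goes through.
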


\begin{proof}
    Set $\mathcal{U}:= \overline{\langle \mathcal{P}\rangle}^{(-\infty,0]}$. 
    By \cite[Theorem 5.1]{Hrbek:2020}, the corresponding Thomason filtration of $\mathcal{U}$ is given by the rule
    \begin{displaymath}
        k\mapsto \bigcup_{Z_{\mathcal{I}} \textrm{ s.t. } (i_{\mathcal{I}})_\ast \mathcal{O}_{Z_{\mathcal{I}}} [ -k]\in \mathcal{U}} Z_{\mathcal{I}} 
    \end{displaymath}
    where $i_{\mathcal{I}} \colon Z_{\mathcal{I}} \to X$ is a closed subscheme associated to a coherent ideal sheaf $\mathcal{I}\subseteq \mathcal{O}_X$. 
    Denote by $\mathcal{D}$ for the collection of pairs $(\mathcal{I},-k)$ with $\mathcal{I}$ an ideal sheaf and $k$ any integer appearing above. Fix $n\in \mathbb{Z}$. 
    It suffices to check that 
    \begin{displaymath}
        \bigcup_{\substack{j\geq n \\ P \in \mathcal{P}}} \operatorname{supp}(\mathcal{H}^j (P)) = \bigcup_{Z_{\mathcal{I}} \textrm{ s.t. } (i_{\mathcal{I}})_\ast \mathcal{O}_{Z_{\mathcal{I}}} [ -n]\in \mathcal{U}} Z_{\mathcal{I}}.
    \end{displaymath}

    First, let $Z_{\mathcal{I}}$ be a closed subset such that $(i_{\mathcal{I}})_\ast \mathcal{O}_{Z_{\mathcal{I}}} [ -n]\in \mathcal{U}$ for some $(\mathcal{I},-n)\in \mathcal{D}$. 
    Now \cite[Lemma 5.1]{Hrbek:2020} tells us $\mathcal{O}_{Z_{\mathcal{I}}} [ -n]$ is a direct summand of an object $A_{\mathcal{I},n}$ which is an $r$-fold extension of objects from the collection $\{ P [t] : t\geq 0, P \in \mathcal{P} \}$. 
    Note that $\mathcal{O}_{Z_{\mathcal{I}}} [ -n]$ is concentrated in degree $n$. 
    Then \Cref{lem:cohomology_vanish_from_aisle} ensures that 
    \begin{displaymath}
        \begin{aligned}
            Z_{\mathcal{I}} 
            &= \operatorname{supp} (\mathcal{H}^n (\mathcal{O}_{Z_{\mathcal{I}}} [ -n])) 
            \\&\subseteq \operatorname{supp}(\mathcal{H}^n (A_{\mathcal{I},n})) \\&\subseteq \bigcup_{\substack{t\geq 0 \\ P \in \mathcal{P}}} \operatorname{supp}(\mathcal{H}^n (P[t])) 
            \\&\subseteq \bigcup_{\substack{j\geq n \\ P \in \mathcal{P}}} \operatorname{supp}(\mathcal{H}^j (P)).
        \end{aligned}
    \end{displaymath}
    This shows that 
    \begin{displaymath}
        \bigcup_{Z_{\mathcal{I}} \textrm{ s.t. } (i_{\mathcal{I}})_\ast \mathcal{O}_{Z_{\mathcal{I}}} [ -n]\in \mathcal{U}} Z_{\mathcal{I}} \subseteq \bigcup_{\substack{j\geq n \\ P \in \mathcal{P}}} \operatorname{supp}(\mathcal{H}^j (P)).
    \end{displaymath}

    Lastly we check the reverse inclusion. 
    Choose some $P\in \mathcal{P}$ and $j\geq n$ such that $\operatorname{supp}(\mathcal{H}^j (P))$ is nonempty. 
    By \cite[Theorem 5.1]{Hrbek:2020}, we know that 
    \begin{displaymath}
        \overline{ \langle \{ (i_{\mathcal{I}})_\ast \mathcal{O}_{\mathcal{Z}_{\mathcal{I}}} [-k] : (\mathcal{I},k)\in \mathcal{D}\} \rangle}^{(-\infty,0]} = \mathcal{U}.
    \end{displaymath}
    Again, from \cite[Lemma 5.1]{Hrbek:2020}, $P$ is a direct summand of an object $B$ which is an $r$-fold extension of objects from the collection
    \begin{displaymath}
        \{ (i_{\mathcal{I}})_\ast \mathcal{O}_{\mathcal{Z}_{\mathcal{I}}} [-k+s] : (\mathcal{I},-k)\in \mathcal{D}, s\geq 0\}.
    \end{displaymath} 
    Applying \Cref{lem:cohomology_vanish_from_aisle}, we see that
    \begin{equation}
        \label{eq:thomason_description}
        \begin{aligned}
            \emptyset\not=\operatorname{supp}(\mathcal{H}^j (P))
            &\subseteq \operatorname{supp}(\mathcal{H}^j (B))
            \\&\subseteq \bigcup_{\substack{(\mathcal{I},-k)\in \mathcal{D} \\ s\geq 0}} \operatorname{supp}(\mathcal{H}^j ((i_{\mathcal{I}})_\ast \mathcal{O}_{\mathcal{Z}_{\mathcal{I}}} [-k+s])).
        \end{aligned}
    \end{equation}
    Choose any $(\mathcal{I},-k)\in \mathcal{D}$ and $s\geq 0$ above such that 
    \begin{displaymath}
        \operatorname{supp}(\mathcal{H}^j (P)) \cap \operatorname{supp}(\mathcal{H}^j ((i_{\mathcal{I}})_\ast \mathcal{O}_{\mathcal{Z}_{\mathcal{I}}} [-k+s]))\not=\emptyset.
    \end{displaymath}
    Note there is at least one such pair from the hypothesis. This means
    \begin{displaymath}
        \mathcal{H}^j ((i_{\mathcal{I}})_\ast \mathcal{O}_{\mathcal{Z}_{\mathcal{I}}} [-k+s])\not=0,
    \end{displaymath}
    which can only be the case when $j = k -s$. However, $j\geq n$, and so, $-k + s \leq -n$.
    From aisles being closed under positive shifts and $(i_{\mathcal{I}})_\ast \mathcal{O}_{\mathcal{Z}_{\mathcal{I}}} [-k] \in \mathcal{U}$, we have that $(i_{\mathcal{I}})_\ast \mathcal{O}_{\mathcal{Z}_{\mathcal{I}}} [-n]\in \mathcal{U}$. 
    It is clear that the support of $\mathcal{H}^j ((i_{\mathcal{I}})_\ast \mathcal{O}_{\mathcal{Z}_{\mathcal{I}}} [-k+s])$ is $Z_{\mathcal{I}}$. 
    So, $\operatorname{supp}(\mathcal{H}^j (P))$ is contained in the union of all such $Z_{\mathcal{I}}$ above in light of \eqref{eq:thomason_description}. 
    In particular, we see that
    \begin{displaymath}
        \operatorname{supp}(\mathcal{H}^j (P)) \subseteq \bigcup_{Z_{\mathcal{I}} \textrm{ s.t. } (i_{\mathcal{I}})_\ast \mathcal{O}_{Z_{\mathcal{I}}} [ -n]\in \mathcal{U}} Z_{\mathcal{I}},
    \end{displaymath}
    which completes the proof.
\end{proof}

\begin{remark}
    \label{rmk:proxy_aisle}
    Let $\mathcal{X}$ be a concentrated algebraic stack with quasi-finite and separated diagonal (resp.\ a Deligne--Mumford $\mathbb{Q}$-stack). 
    Consider any set $\mathcal{C}\subseteq D_{\operatorname{qc}}(\mathcal{X})$. 
    By \Cref{prop:psuedoapprox}, $D^{\leq 0}_{\operatorname{qc}}(\mathcal{X})$ is compactly generated by $D^{\leq 0}_{\operatorname{qc}}(\mathcal{X}) \cap \operatorname{Perf}(\mathcal{X})$. 
    Then, from \cite[Lemmas 2.6 \& 2.8]{Hrbek/Lank/LeGros/Pavon:2026}, it follows
    \begin{displaymath}
        \overline{\langle \mathcal{C} \rangle}^{(-\infty,0]}_{\otimes} = \overline{\langle (D^{\leq 0}_{\operatorname{qc}}(\mathcal{X}) \cap \operatorname{Perf}(\mathcal{X})) \otimes^{\mathbf{L}} \mathcal{C} \rangle}^{(-\infty,0]}.
    \end{displaymath}
\end{remark}

\begin{lemma}
    \label{lem:pullback_t_exactness}
    Let $\mathcal{X}$ be a concentrated algebraic stack with quasi-finite and separated diagonal (resp.\ a Deligne--Mumford $\mathbb{Q}$-stack). 
    Suppose $f\colon U \to \mathcal{X}$ is a flat morphism from an affine scheme. 
    Then, for any $\mathcal{P}\subseteq \operatorname{Perf}(X)$ such that $\operatorname{preaisle}_{\otimes}(\mathcal{P})$ is an aisle on $D_{\operatorname{qc}}(\mathcal{X})$, one has that $\mathbf{L} f^\ast$ induces a $t$-exact functor
    \begin{displaymath}
        (D_{\operatorname{qc}}(\mathcal{X}),\overline{\langle \mathcal{P} \rangle}^{(-\infty,0]}_{\otimes}) \to (D_{\operatorname{qc}}(U),\overline{ \langle \mathbf{L} f^\ast \mathcal{P} \rangle }^{(-\infty,0]}_{\otimes}).
    \end{displaymath}
\end{lemma}

\begin{proof}
    We prove the desired claim by contradiction. 
    Since $U$ is affine, all aisles are tensor compatible. 
    By \Cref{rmk:proxy_aisle}, it is straightforward to see that $\mathbf{L}f^\ast \overline{\langle \mathcal{P} \rangle}^{(-\infty,0]}_{\otimes} \subseteq \overline{ \langle \mathbf{L} f^\ast \mathcal{P} \rangle}^{(-\infty,0]}$, which tells us $\mathbf{L}f^\ast$ is right $t$-exact. 
    Indeed, from \Cref{lem:dubey_sahoo_corrected}, it suffices to check that 
    \begin{displaymath}
        \mathbf{L}f^\ast ((D^{\leq 0}_{\operatorname{qc}}(\mathcal{X}) \cap \operatorname{Perf}(\mathcal{X})) \otimes^{\mathbf{L}} \mathcal{P})
        \subseteq \overline{ \langle \mathbf{L} f^\ast \mathcal{P} \rangle}^{(-\infty,0]} 
    \end{displaymath}
    Since $\mathbf{L}f^\ast (D^{\leq 0}_{\operatorname{qc}}(\mathcal{X}) \cap \operatorname{Perf}(\mathcal{X})) \subseteq \overline{\langle \mathcal{O}_U \rangle}^{(-\infty,0]}$, the claim follows. 
    Hence, the assumption ensures $\mathbf{L}f^\ast$ must fail to be left $t$-exact. 
    So, we can find an $E\in \operatorname{coaisle}_{\otimes}(\mathcal{P})$ such that $\mathbf{L}f^\ast E \not\in \operatorname{coaisle}_{\otimes}(\mathbf{L} f^\ast \mathcal{P})$. 
    This tells us $\mathbf{L} f^\ast E [-1]\not\in (\overline{\langle \mathbf{L} f^\ast \mathcal{P}\rangle}^{(-\infty,0]}_{\otimes})^\perp$. 
    By \cite[Lemma 2.2]{DeDeyn/Lank/ManaliRahul/Peng:2025}, we know that $\mathbf{L}f^\ast \mathcal{P}\subseteq \operatorname{Perf}(U)$. 
    Moreover, using \cite[Lemma 3.1]{AlonsoTarrio/Lopez/Salorio:2003}, we can find an $A\in \mathcal{P}$ and $j\geq 0$ such that $\operatorname{Hom}(\mathbf{L}f^\ast A[j] , \mathbf{L}f^\ast E[-1])\not=0$. 
    Now we use adjunction to see that,
    \begin{displaymath}
        \begin{aligned}
            \operatorname{Hom} & (\mathbf{L}f^\ast A[j] , \mathbf{L}f^\ast E[-1]) 
            \\&\cong \operatorname{Hom}(\mathcal{O}_U, \operatorname{\mathbf{R}\mathcal{H}\! \mathit{om}}(\mathbf{L}f^\ast A[j], \mathbf{L}f^\ast E[-1]) ) && \textrm{\cite[\href{https://stacks.math.columbia.edu/tag/08DH}{Tag 08DH}]{StacksProject}}
            \\&\cong \operatorname{Hom}(\mathcal{O}_U, \mathbf{L}f^\ast \operatorname{\mathbf{R}\mathcal{H}\! \mathit{om}}(A[j], E[-1]) ) && \textrm{\cite[Lemma 2.3]{DeDeyn/Lank/ManaliRahul/Peng:2025}}.
        \end{aligned}
    \end{displaymath}
    This implies that
    \begin{displaymath}
        \mathbf{L}f^\ast \operatorname{\mathbf{R}\mathcal{H}\! \mathit{om}}(A[j], E[-1]) \not\in D^{\geq 1}_{\operatorname{qc}}(U),
    \end{displaymath}
    and because $\mathbf{L}f^\ast$ is $t$-exact with respect to the standard $t$-structures, 
    \begin{displaymath}
        \operatorname{\mathbf{R}\mathcal{H}\! \mathit{om}}(A[j], E[-1]) \not\in D^{\geq 1}_{\operatorname{qc}}(\mathcal{X}).
    \end{displaymath}
    Then we can find a $B\in D^{\leq 0}_{\operatorname{qc}}(\mathcal{X})$ such that
    \begin{displaymath}
        \begin{aligned}
            0 &\not= 
            \operatorname{Hom}(B , \operatorname{\mathbf{R}\mathcal{H}\! \mathit{om}}(A[j], E[-1]) )
            \\&\cong \operatorname{Hom}(B \otimes^{\mathbf{L}} A[j], E[-1] ) && (\textrm{\cite[\href{https://stacks.math.columbia.edu/tag/08DH}{Tag 08DH}]{StacksProject}}).
        \end{aligned}
    \end{displaymath}
    However, $E[-1]$ is in $(\overline{\langle \mathcal{P}\rangle}^{(-\infty,0]}_{\otimes})^\perp$ and $B\otimes^{\mathbf{L}} A[j]\in \overline{\langle \mathcal{P}\rangle}^{(-\infty,0]}_{\otimes}$, which is a contradiction.
\end{proof}

\begin{proof}
    [Proof of \Cref{thm:stacky_DB}]
    Before getting to work, we make a few observations that will be used freely in the proof. 
    First, our constraints on $\mathcal{X}$ ensures that $D_{\operatorname{qc}}(X)^c$ (i.e.\ the compacts) coincide with $\operatorname{Perf}(\mathcal{X})$. 
    Secondly, as $\mathcal{X}$ is concentrated and $1$-Thomason (see \cite[Theorem A]{Hall/Rydh:2017}), we have that $D_{\operatorname{qc}}(\mathcal{X})$ is a rigidly compactly generated tensor triangulated category. 
    Thirdly, from \Cref{lem:dubey_sahoo_corrected} (which uses that $D^{\leq 0}_{\operatorname{qc}}(\mathcal{X})$ is compactly generated), it follows that $\operatorname{preaisle}_{\otimes} (\mathcal{P})$ is a $\otimes$-aisle on $D_{\operatorname{qc}}(\mathcal{X})$ for all $\mathcal{P}\subseteq \operatorname{Perf}(\mathcal{X})$. 
    That is, $\overline{\langle \mathcal{P} \rangle}^{(-\infty,0]}_{\otimes}$ exists for all $\mathcal{P}\subseteq \operatorname{Perf}(\mathcal{X})$. Finally, since $\mathcal{X}$ satisfies the pseudoapproximation by compact complexes by the (proof of) \Cref{prop:psuedoapprox}, \Cref{lem:affine_supported_by_perfects} yields that any Thomason filtration on $\mathcal{X}$ is supported by perfects.

    Now we move onwards to our proof. 
    This will be done by constructing a series of maps between the desired collections. 
    Let $f\colon U \to \mathcal{X}$ be a smooth surjective morphism from an affine scheme (which exists by \textrm{\cite[\href{https://stacks.math.columbia.edu/tag/04YC}{Tag 04YC}]{StacksProject}}). 
    To start, we show there is an injective mapping
    \begin{equation}
        \label{eq:classification_map1}
        \begin{aligned}
            \{ \otimes\textrm{-aisle on } & D_{\operatorname{qc}}(\mathcal{X}) \textrm{ generated by } \mathcal{P} \subseteq \operatorname{Perf}(\mathcal{X}) \} 
            \\&\to \{ \otimes\textrm{-aisle on } D_{\operatorname{qc}}(U) \textrm{ generated by } \mathcal{Q} \subseteq \operatorname{Perf}(U) \}.
        \end{aligned}
    \end{equation}
    This part of the proof is done by contradiction. 
    Suppose $\mathcal{P},\mathcal{P}^\prime \subseteq \operatorname{Perf}(\mathcal{X})$ satisfy $\overline{\langle \mathcal{P} \rangle}^{(-\infty,0]}_{\otimes} \not= \overline{\langle \mathcal{P}^\prime \rangle}^{(-\infty,0]}_{\otimes}$ and $\overline{\langle \mathbf{L} f^\ast \mathcal{P} \rangle}^{(-\infty,0]}_{\otimes} = \overline{\langle \mathbf{L} f^\ast \mathcal{P}^\prime \rangle}^{(-\infty,0]}_{\otimes}$. 
    Then we can find an $E\in \overline{\langle \mathcal{P} \rangle}^{(-\infty,0]}_{\otimes}$ such that $E\not\in \overline{\langle \mathcal{P}^\prime \rangle}^{(-\infty,0]}_{\otimes}$. 
    Consider the truncation triangle with respect to the aisle $\overline{\langle \mathcal{P}^\prime \rangle}^{(-\infty,0]}_{\otimes}$,
    \begin{displaymath}
        \tau^{\leq 0}_{\mathcal{P}^\prime} E \to E \to \tau^{\geq 1}_{\mathcal{P}^\prime} \to (\tau^{\leq 0}_{\mathcal{P}^\prime} E).
    \end{displaymath}
    By \Cref{lem:pullback_t_exactness}, the following
    \begin{displaymath}
        \mathbf{L} f^\ast \tau^{\leq 0}_{\mathcal{P}^\prime} E \to \mathbf{L} f^\ast E \to \mathbf{L} f^\ast \tau^{\geq 1}_{\mathcal{P}^\prime} \to \mathbf{L} f^\ast (\tau^{\leq 0}_{\mathcal{P}^\prime} E)
    \end{displaymath}
    is the truncation triangle for $\mathbf{L} f^\ast E$ with respect to $\overline{\langle \mathbf{L} f^\ast \mathcal{P}^\prime \rangle}^{(-\infty,0]}_{\otimes}$. 
    However, the assumption tells us that $\mathbf{L} f^\ast E \in \overline{\langle \mathbf{L} f^\ast \mathcal{P}^\prime \rangle}^{(-\infty,0]}_{\otimes}$, and so, $\mathbf{L} f^\ast \tau^{\geq 1}_{\mathcal{P}^\prime} E= 0$. 
    From flatness of $f$, it follows that $\tau^{\geq 1}_{\mathcal{P}^\prime} E = 0$. 
    This tells us that $E \in \overline{\langle \mathcal{P}^\prime \rangle}^{(-\infty,0]}_{\otimes}$, which is absurd given the assumption. 
    Hence, we have the desired injection for \eqref{eq:classification_map1}.

    Next we show there is an injective mapping
    \begin{equation}
        \label{eq:classification_map2}
        \begin{aligned}
            \{ \otimes\textrm{-aisle on } & D_{\operatorname{qc}}(\mathcal{X}) \textrm{ generated by } \mathcal{P} \subseteq \operatorname{Perf}(\mathcal{X}) \} 
            \\&\to \{ \textrm{Thomason filtrations on } U \}
        \end{aligned}
    \end{equation}
    which factors through \eqref{eq:classification_map1}. 
    From \cite[Theorem 5.1]{Hrbek:2020}, we have an injective mapping the image of \eqref{eq:classification_map1} into collection of Thomason filtrations on $U$. Moreover, \Cref{lem:affine_supported_by_perfects} tells us each Thomason filtration on $U$ is supported by perfects. 
    Specifically, for each $\mathcal{P}\subseteq \operatorname{Perf}(\mathcal{X})$, we assign $\overline{\langle \mathbf{L} f^\ast \mathcal{P} \rangle}^{(-\infty,0]}_{\otimes}$ to the Thomason filtration on $U$ given by 
    \begin{displaymath}
        \phi_{\mathbf{L} f^\ast \mathcal{P}} \colon n\mapsto \bigcup_{\substack{j\geq n\\ P\in \mathcal{P}}} \operatorname{supp}(\mathcal{H}^j ( \mathbf{L}f^\ast P)).
    \end{displaymath}
    The presentation above follows from \Cref{lem:thomason_description}. 
    So, tying things to together, we have the desired mapping in \eqref{eq:classification_map2}.

    Now we claim there is a bijective mapping (i.e.\ the desired one-to-one correspondence)
    \begin{equation}
        \label{eq:classification_map3}
        \begin{aligned}
            \{ \otimes\textrm{-aisle on } & D_{\operatorname{qc}}(\mathcal{X}) \textrm{ generated by } \mathcal{P} \subseteq \operatorname{Perf}(\mathcal{X}) \} 
            \\&\to \{  \textrm{Thomason filtrations on } \mathcal{X}\}
        \end{aligned}
    \end{equation}
    which factors through \eqref{eq:classification_map2}. 
    It suffices to show there is a bijective mapping
    \begin{equation}
        \label{eq:classification_map4}
        \begin{aligned}
            \{  \textrm{Thomason filtrations on } \mathcal{X}\} 
            \\& \to \{ \phi_{\mathbf{L} f^\ast \mathcal{P}} \colon \mathcal{P}\subseteq \operatorname{Perf}(\mathcal{X}) \}.
        \end{aligned}
    \end{equation}
    Indeed, the image of the injective mapping in \eqref{eq:classification_map2} is precisely the target in \eqref{eq:classification_map4}. 
    To see, by construction, the image of \eqref{eq:classification_map2} lands into $\{ \phi_{\mathbf{L} f^\ast \mathcal{P}} \colon \mathcal{P}\subseteq \operatorname{Perf}(\mathcal{X}) \}$, so we have an injective map
    \begin{displaymath}
        \begin{aligned}
            \{ \otimes\textrm{-aisle on } D_{\operatorname{qc}}(\mathcal{X}) \textrm{ generated by } & \mathcal{P} \subseteq \operatorname{Perf}(\mathcal{X}) \} 
            \\&\to \{ \phi_{\mathbf{L} f^\ast \mathcal{P}} \colon \mathcal{P}\subseteq \operatorname{Perf}(\mathcal{X}) \}.
        \end{aligned}
    \end{displaymath}
    On the other hand, for each $\mathcal{P}\subseteq \operatorname{Perf}(\mathcal{X})$, we know $\overline{\langle \mathcal{P} \rangle}^{(-\infty,0]}_{\otimes}$ exists on $D_{\operatorname{qc}}(\mathcal{X})$, so its image under \eqref{eq:classification_map2} is exactly $\phi_{\mathbf{L} f^\ast \mathcal{P}}$, showing the mapping is also surjective.
    
    Let $\psi$ be a Thomason filtration on $\mathcal{X}$. Since it is supported by perfects, we can find $\mathcal{Q}\subseteq \operatorname{Perf}(\mathcal{X})$ such that 
    \begin{displaymath}
        \psi(n)=\bigcup_{\substack{j\geq n\\ Q\in \mathcal{Q}}} \operatorname{supp}(\mathcal{H}^j (Q)).
    \end{displaymath}
    Using \cite[Lemma 4.8]{Hall/Rydh:2017}, we can see that
    \begin{displaymath}
        \begin{aligned}
            \bigcup_{\substack{j\geq n\\ Q\in \mathcal{Q}}} \operatorname{supp}(\mathcal{H}^j (\mathbf{L}f^\ast Q))
            &= \bigcup_{\substack{j\geq n\\ Q\in \mathcal{Q}}} f^{-1}(\operatorname{supp}(\mathcal{H}^j (Q)))
            \\&= f^{-1}( \bigcup_{\substack{j\geq n\\ Q\in \mathcal{Q}}} \operatorname{supp}(\mathcal{H}^j (Q))) 
            \\&= f^{-1}(\phi(n)).
        \end{aligned}
    \end{displaymath}
    Using \cite[Lemma 4.8(3)]{Hall/Rydh:2017}, it is straightforward to check that this defines a Thomason filtration on $U$. 
    This Thomason filtration on $U$ is denoted by $f^{-1}(\phi)$. It is given by the rule $n \mapsto f^{-1}(\phi(n))$. 
    In fact, the very construction shows $f^{-1}(\phi) = \phi_{\mathbf{L}f^\ast \mathcal{Q}}$. 
    This gives us a well-defined mapping in \eqref{eq:classification_map4}. 
    We need to check it is bijective. 
    Fortunately, the construction of the mapping shows that it must be surjective, so we only need to check it is injective. 
    However, this follows from the fact that $f\colon U \to \mathcal{X}$ is surjective, completing the proof.
\end{proof}

\bibliographystyle{alpha}
\bibliography{mainbib}

\end{document}